\newtheorem{theorem}{Theorem}[section]
\newtheorem{lemma}[theorem]{Lemma}
\newtheorem{proposition}[theorem]{Proposition}
\theoremstyle{definition}
\newtheorem{definition}[theorem]{Definition}
\theoremstyle{remark}
\newtheorem{remark}[theorem]{Remark}
\numberwithin{equation}{section}
\begin{document}

\setcounter{page}{1}

\title[Hausdorff operators on modulation and Wiener amalgam spaces]{Hausdorff operators on modulation and Wiener amalgam spaces}

\author[G. Zhao, D. Fan, \MakeLowercase{and} W. Guo]{Guoping Zhao,$^1$ Dashan Fan,$^2$ \MakeLowercase{and} Weichao Guo$^3$$^{*}$}

\address{$^{1}$School of Applied Mathematics, Xiamen University of Technology, Xiamen, 361024, P.R.China}
\email{\textcolor[rgb]{0.00,0.00,0.84}{guopingzhaomath@gmail.com}}

\address{$^{2}$Department of Mathematics, University of Wisconsin-Milwaukee, Milwaukee, WI 53201, USA}
\email{\textcolor[rgb]{0.00,0.00,0.84}{fan@uwm.edu}}

\address{$^{3}$School of Mathematics and Information Sciences, Guangzhou University, Guangzhou, 510006, P.R.China}
\email{\textcolor[rgb]{0.00,0.00,0.84}{weichaoguomath@gmail.com}}

%\dedicatory{This paper is dedicated to Professor ABCD}

\let\thefootnote\relax\footnote{Copyright 2016 by the Tusi Mathematical Research Group.}

\subjclass[2010]{Primary 42B35; Secondary 47G10.}

\keywords{Hausdorff operator, sharp conditions, modulation space, Wiener amalgam space.}

\date{Received: xxxxxx; Revised: yyyyyy; Accepted: zzzzzz.
\newline \indent $^{*}$Corresponding author}

\begin{abstract}
  We give the sharp conditions for boundedness of Hausdorff operators on certain modulation and Wiener amalgam spaces.
\end{abstract} \maketitle

\section{Introduction and Preliminary}
Hausdorff operator, originated from some classical summation methods, has a long history in the study of real and complex analysis.
We refer the reader to \cite{Chen_Fan_Wang_2013} and \cite{Liflyand_survey} for a survey of some historic background and recent developments
about Hausdorff operator.

For a suitable function $\Phi$, one of the corresponding Hausdorff operator $H_{\Phi}$ can be defined by
\begin{equation}\label{Definition, Hausdorff operator}
  H_{\Phi}f(x)=\int_{\mathbb{R}^n}\Phi(y)f\left(\frac{x}{|y|}\right)dy.
\end{equation}
Although there is a general definition where $f(A(y)x)$, with matrix $ A$,
stays in place of $f(x/|y|)$ in (\ref{Definition, Hausdorff operator}), we only consider the special case in this paper.
However we do not exclude that the general case will prove to be of interest as well and we are keep interested in the general case.

There are many known results about the boundedness of Hausdorff operators on various function spaces,
such as \cite{Lerner-Liflyand_Austr., Liflyand_Acta_Sci_Math, Liflyand-Miyachi_Studia,Liflyand-Moricz_Proceeding}.
Unfortunately, the sharp conditions on boundedness of Hausdorff operator can be characterized in only few cases.
One can see \cite{Wu_Chen_SCIChina_2014} for the sharp characterization for the boundedness of Hausdorff operators on $L^p$,
and see \cite{Fan_Lin_Analysis_2014, Ruan_Fan_JMAA_2016} for the sharp characterization for the boundedness of Hausdorff operators on Hardy spaces $H^1$ and $h^1$.
We observe that
the characterizations of boundedness of Hausdorff operator were also established in some other function spaces
(see \cite{Chen_Fan_Wang_2013,Gao-Zhao_Anal.Math_2015}). However,
we find that these spaces have some similar properties as the $L^{p}$ spaces. Let us briefly describe this fact in the following.

In order to prove the necessity of boundedness of Hausdorff operator on $L^p$,
we must choose a suitable function $f$ and estimate $\|H_{\Phi}f\|_{L^p}$ from below by some integral regarding $\Phi$.
The space $L^p$ is fit for this lower estimates,
since for a function $f$, the norm $\|f\|_{L^p}$ only depends on the absolute value of $f$
and the $L^p$ norm has the scaling property $\|f(s\cdot)\|_{L^p}=s^{-n/p}\|f\|_{L^p}$.
%$\|f(\lambda\cdot)\|_{L^p}=\lambda^{-n/p}\|f\|_{L^p}$.
We observe that the function spaces, for which the characterizations of boundedness of Hausdorff operator are established so far,
all have the above two properties as the $L^p$ spaces so that the proof of the necessity follows the same line as that on $L^p$.
However, in the case of frequency decomposition spaces, such as the modulation spaces or Wiener amalgam spaces,
the situation becomes quite different and complicated.

The modulation spaces $M_{p,q}^{s}$ were first introduced by Feichtinger \cite{Feichtinger} in 1983.
As function spaces associated with the uniform decomposition (see \cite{Tribel_modulation_1983}),
modulation spaces have a close relationship to the topic of time-frequency analysis (see \cite{Grochenig}),
and they have been regarded as appropriate function spaces for the study of partial differential equations (see \cite{Wang_Hudzik_JDE_2007}).
We refer the reader to \cite{Feichtinger_Survey} for
some motivations and historical remarks.
One can also refer our recent paper \cite{Guo-Wu-Yang-Zhao_JFA,Guo-Wu-Zhao_Proc.AMS}  for the properties of modulation spaces and Wiener amalgam spaces.

As a frequency decomposition space, the norm of $f$ in a modulation space can not be completely determined by the absolute value of the function.
On the other hand, the scaling property of modulation spaces is not as simple as that of $L^p$(see \cite{Sugimoto_Tomita_JFA_2007}).
Thus, it is interesting to find out the sharp conditions for the boundedness of Hausdorff operator on modulation spaces,
since in this case the method used in the $L^p$ case is not adoptable.

We also consider the boundedness of Hausdorff operator on Wiener amalgam spaces $W_{p,q}^s$.
In general, a Wiener amalgam space can be represented by $W(B,C)$,
where $B$ and $C$ are served as the local and global component respectively.
In this paper, we consider a special case $W(\mathscr{F}^{-1}L_q^s,L_p)$, which is closely related to modulation spaces.
For simplicity in the notation, we also use $W_{p,q}^s$ to denote this function space.
Before stating the main theorems, we make some preparations as follows.

We need to add some suitable assumptions on $\Phi$.
Firstly, in order to establish the sharp conditions for the boundedness of Hausdorff operator, we
assume $\Phi\geqslant 0$.
In the proof of the necessity part, we must make some (pointwise) estimates from below.
That is why the assumption $\Phi\geqslant 0$ is necessary in most of the known characterizations for
the boundedness of Hausdorff operator on function spaces(see \cite{Fan_Lin_Analysis_2014,Ruan_Fan_JMAA_2016,Wu_Chen_SCIChina_2014}).

Secondly, we make another assumption for $\Phi$ as following:
\begin{equation}\label{basic assumption for Phi}
  \int_{B(0,1)}|y|^n\Phi(y)dy<\infty,\\
  \text{ and }
  \int_{B(0,1)^c}\Phi(y)dy<\infty.
\end{equation}

We would like to give following remarks not only for explaining the reasonability of the assumption (\ref{basic assumption for Phi}),
but also to give some important properties of Hausdorff operator under the assumption (\ref{basic assumption for Phi}).
\begin{remark}[Assumption (\ref{basic assumption for Phi}) is weakest]
In fact, (\ref{basic assumption for Phi}) is the weakest assumption to ensure that the Schwartz function can be mapped into tempered distribution by
Hausdorff operator $H_{\Phi}$.

On one hand, if $H_{\Phi}f\in \mathscr{S}'$, it must be locally integrable,
and since $\Phi\geqslant0$, we have
\begin{equation*}
  \begin{split}
    \infty
    >
     &
    \int_{B(0,1)}|H_{\Phi}f(x)|dx
    =
    \int_{B(0,1)}\int_{\mathbb{R}^n}\Phi(y)f(x/|y|)dydx
    \\
    = &
    \int_{\mathbb{R}^n}\Phi(y)\int_{B(0,1)}f(x/|y|)dx dy
    %\\
    = %&
    \int_{\mathbb{R}^n}\Phi(y)|y|^n\int_{B(0,\frac{1}{|y|})}f(x)dx dy
    \\
    = &
    \int_{B(0,1)}\Phi(y)|y|^n\int_{B(0,\frac{1}{|y|})}f(x)dx dy + \int_{B^c(0,1)}\Phi(y)|y|^n\int_{B(0,\frac{1}{|y|})}f(x)dx dy.
  \end{split}
\end{equation*}
On the other hand,
for any nonnegative Schwartz function $f$ satisfying $f=1$ on $B(0,1)$, we have
\begin{equation*}
  \begin{split}
    \int_{B(0,1)}|H_{\Phi}f(x)|dx
    \geqslant &
    \int_{B(0,1)}\Phi(y)|y|^n\int_{B(0,1)}f(x)dx dy + \int_{B^c(0,1)}\Phi(y)|y|^n\int_{B(0,\frac{1}{|y|})}dxdy
    \\
    \sim &
    \int_{B(0,1)}\Phi(y)|y|^n dy + \int_{B^c(0,1)}\Phi(y) dy.
  \end{split}
\end{equation*}
This implies that
\begin{equation*}
  \begin{split}
    \int_{B(0,1)}\Phi(y)|y|^ndy+ \int_{B^c(0,1)}\Phi(y) dy<\infty.
  \end{split}
\end{equation*}
\end{remark}

\begin{remark}[$H_{\Phi}f$ is well defined as a tempered distribution]
  If $\Phi$ satisfies (\ref{basic assumption for Phi}),
$H_{\Phi}f$ makes sense for all $f\in \mathscr{S}(\mathbb{R}^n)$ for the reason that for $x\neq 0$,
\begin{equation*}
  \begin{split}
    |H_{\Phi}f(x)|
    \leqslant &
    \left(\int_{B(0,1)}
    +
    \int_{B^c(0,1)}\right)\Phi(y)f(x/|y|)dy
    \\
    \leqslant &
    |x|^{-n}\int_{B(0,1)}|y|^n\Phi(y)(|x/|y||^{n}f(x/|y|))
    +
    \|f\|_{L^{\infty}}\int_{B^c(0,1)}\Phi(y)dy
    \\
    \lesssim &
    C_f(1+|x|^{-n})\left(\int_{B(0,1)}|y|^n\Phi(y)dy+\int_{B^c(0,1)}\Phi(y)dy\right)<\infty,
  \end{split}
\end{equation*}
and
\begin{equation*}
  \begin{split}
    &\int_{B(0,1)}|H_{\Phi}f(x)|dx
    \leqslant
    \int_{B(0,1)}\int_{\mathbb{R}^n}\Phi(y)|f(x/|y|)|dydx
    \\
    &=
    \int_{B(0,1)}\int_{B(0,1)}\Phi(y)|f(x/|y|)|dydx
    +
    \int_{B(0,1)}\int_{B^c(0,1)}\Phi(y)|f(x/|y|)|dydx
    \\
    &\leqslant
    \int_{B(0,1)}\Phi(y)\int_{\mathbb{R}^n}|f(x/|y|)|dx dy
    +
    |B(0,1)|\cdot\|f\|_{L^{\infty}}\cdot\int_{B^c(0,1)}\Phi(y)dy
    \\
    &\leqslant
    \int_{B(0,1)}|y|^n\Phi(y)dy \|f\|_{L^1}
    +
    |B(0,1)|\cdot\|f\|_{L^{\infty}}\cdot\int_{B^c(0,1)}\Phi(y)dy<\infty.
  \end{split}
\end{equation*}
Thus, for $f\in \mathscr{S}(\mathbb{R}^n)$, $H_{\Phi}f$ is a locally integrable function, which has polynomial growth at infinity.
It implies that $H_{\Phi}f$ is a tempered distribution for $f\in \mathscr{S}(\mathbb{R}^n)$.
Write
\begin{equation*}
  \langle H_{\Phi}f, g\rangle=\int_{\mathbb{R}^n}H_{\Phi}f(x)g(x)dx,
\end{equation*}
where $\langle u,f\rangle$ means the action of a tempered distribution $u$ on a Schwartz function $f$.
\end{remark}

\begin{remark}[$H_{\Phi}: \mathscr{S}\rightarrow \mathscr{S}'$ is continuous]
For $f,g\in \mathscr{S}(\mathbb{R}^n)$, we have that
\begin{equation*}
  \int_{\mathbb{R}^n}|f(x/|y|)g(x)|dx
  \leqslant
  \|f\|_{L^{\infty}}\|g\|_{L^1}
\end{equation*}
and
\begin{equation*}
  \int_{\mathbb{R}^n}|f(x/|y|)g(x)|dx
  \leqslant
  \|g\|_{L^{\infty}}\|f(\cdot/|y|)\|_{L^1}
  \leqslant |y|^n\|g\|_{L^{\infty}}\|f\|_{L^1}.
\end{equation*}
It follows that
\begin{equation*}
  \begin{split}
    &\int_{\mathbb{R}^n}|\Phi(y)|\int_{\mathbb{R}^n}|f(x/|y|)g(x)|dxdy
    \\
    \lesssim &
    (\|f\|_{L^1}+\|f\|_{L^{\infty}})(\|g\|_{L^1}+\|g\|_{L^{\infty}})
    \int_{\mathbb{R}^n}|\Phi(y)|\min\{1,|y|^n\}dy.
  \end{split}
\end{equation*}
Thus,
\begin{equation*}
  \begin{split}
    |\langle H_{\Phi}f, g\rangle|
    = &
    |\int_{\mathbb{R}^n}H_{\Phi}f(x) g(x)dx|
    \\
    \leqslant &
    \int_{\mathbb{R}^n}\int_{\mathbb{R}^n}\Phi(y)|f({x}/{|y|})|dy g(x)dx
    \\
    \leqslant &
    \int_{\mathbb{R}^n}\Phi(y)\int_{\mathbb{R}^n}|f({x}/{|y|})|\cdot|g(x)|dxdy
    \\
    \lesssim &
    (\|f\|_{L^1}+\|f\|_{L^{\infty}})(\|g\|_{L^1}+\|g\|_{L^{\infty}})
    \int_{\mathbb{R}^n}|\Phi(y)|\min\{1,|y|^n\}dy.
  \end{split}
\end{equation*}
Using the definition of Schwartz function space, we have
$|<H_{\Phi}f, g_l>|\rightarrow 0$, for $f,g_l\in \mathscr{S}(\mathbb{R}^n)$
satisfying that $g_l\rightarrow 0$ as $l\rightarrow\infty$ in the topology of $\mathscr{S}$.
\end{remark}

\begin{remark}[Fourier transform of $H_{\Phi}f$]
  Define
\begin{equation*}
  \widetilde{H_{\Phi}}f(x)=\int_{\mathbb{R}^n}\Phi(y)|y|^nf(|y|x)dy.
\end{equation*}
By a similar method used before, we can verify that
$\widetilde{H_{\Phi}}f$ is a tempered distribution and that the map $\widetilde{H_{\Phi}}: \mathscr{S}\rightarrow \mathscr{S}'$ is continuous.

Moreover, we have
\begin{equation*}
  \widehat{H_{\Phi}f}=\widetilde{H_{\Phi}}\widehat{f} \hspace{6mm}\text{\  in the distribution sense.}
\end{equation*}
Indeed, for $f,g\in \mathscr{S}(\mathbb{R}^n)$, we have
\begin{equation*}
  \begin{split}
    \langle\widehat{H_{\Phi}f},g\rangle
    = &
    \langle H_{\Phi}f,\widehat{g}\rangle
    %\\
    = %&
    \int_{\mathbb{R}^n}\int_{\mathbb{R}^n}\Phi(y)f({x}/{|y|})dy \widehat{g}(x)dx
    \\
    = &
    \int_{\mathbb{R}^n}\Phi(y)\int_{\mathbb{R}^n}f({x}/{|y|}) \widehat{g}(x)dx dy
    \\
    = &
    \int_{\mathbb{R}^n}\Phi(y)\int_{\mathbb{R}^n}\widehat{f({\cdot}/{|y|})}(x)g(x)dx dy
    \\
    = &
    \int_{\mathbb{R}^n}\Phi(y)\int_{\mathbb{R}^n}|y|^n\widehat{f}(|y|x)g(x)dx dy
    \\
    = &
    \int_{\mathbb{R}^n}\int_{\mathbb{R}^n}|y|^n\Phi(y)\widehat{f}(|y|x)dyg(x)dx=\langle\widetilde{H_{\Phi}}\widehat{f},g\rangle.
  \end{split}
\end{equation*}
\end{remark}

\begin{remark}[Adjoint operator of $H_{\Phi}f$]
  We define the complex inner product
  \begin{equation*}
    \langle f|\ g\rangle=\int_{\mathbb{R}^n}f(x)\overline{g}(x)dx.
  \end{equation*}
  The adjoint operator of $H_{\Phi}f$ is defined by
  \begin{equation*}
    \langle H_{\Phi}f|\ g\rangle=\langle f|\ H_{\Phi}^*g\rangle
  \end{equation*}
  for $f,g\in \mathscr{S}(\mathbb{R}^n)$.
  By a direct calculation, we have
  \begin{equation*}
    \begin{split}
      \langle H_{\Phi}f|\ g\rangle
      = &
      \int_{\mathbb{R}^n}H_{\Phi}f(x) \overline{g}(x)dx
      %\\
      = %&
      \int_{\mathbb{R}^n}\int_{\mathbb{R}^n}\Phi(y)f\left({x}/{|y|}\right)dy\overline{g}(x)dx
      \\
      = &
      \int_{\mathbb{R}^n}\Phi(y)\int_{\mathbb{R}^n}f\left({x}/{|y|}\right)\overline{g}(x)dx dy
      %\\
      = %&
      \int_{\mathbb{R}^n}|y|^n\Phi(y)\int_{\mathbb{R}^n}f(x)\overline{g}(|y|x)dx dy
      \\
      = &
      \int_{\mathbb{R}^n}f(x)\int_{\mathbb{R}^n}\Phi(y)|y|^n\overline{g}(|y|x)dy dx
      =
      \langle f|\ \widetilde{H_{\Phi}}g\rangle.
    \end{split}
  \end{equation*}
  It follows that
  \begin{equation*}
  H_{\Phi}^*g=\widetilde{H_{\Phi}}g \hspace{6mm}\text{\  in the distribution sense.}
  \end{equation*}
\end{remark}

We turn to give definitions of modulation and Wiener amalgam spaces.

Let $\mathscr{S}:= \mathscr{S}(\mathbb{R}^{n})$ be the Schwartz space
and $\mathscr{S}':=\mathscr{S}'(\mathbb{R}^{n})$ be the space of tempered distributions.
We define the Fourier transform $\mathscr {F}f$ and the inverse Fourier transform $\mathscr {F}^{-1}f$ of $f\in \mathscr{S}(\mathbb{R}^{n})$ by
$$
\mathscr {F}f(\xi)=\hat{f}(\xi)=\int_{\mathbb{R}^{n}}f(x)e^{-2\pi ix\cdot \xi}dx
,
~~
\mathscr {F}^{-1}f(x)=f^{\vee}(x)=\int_{\mathbb{R}^{n}}f(\xi)e^{2\pi ix\cdot \xi}d\xi.
$$

The translation operator is defined as $T_{x_0}f(x)=f(x-x_0)$ and
the modulation operator is defined as $M_{\xi}f(x)=e^{2\pi i\xi \cdot x}f(x)$, for $x, x_0, \xi\in\mathbb{R}^n$.
Fixed a nonzero function $\phi\in \mathscr{S}$, the short-time Fourier
transform of $f\in \mathscr{S}'$ with respect to the window $\phi$ is given by
\begin{equation*}
V_{\phi}f(x,\xi)=\langle f,M_{\xi}T_x\phi\rangle,
\end{equation*}
and that can be written as
\begin{equation*}
  V_{\phi}f(x,\xi)=\int_{\mathbb{R}^n}f(y)\overline{\phi(y-x)}e^{-2\pi iy\cdot \xi}dy
\end{equation*}
if $f\in \mathscr{S}$.
We give the (continuous) definition of modulation space $\mathcal {M}_{p,q}^s$ as follows.

\begin{definition}
Let $s \in \mathbb{R}$, $0<p,q\leqslant \infty$. The (weighted) modulation space $\mathcal {M}_{p,q}^s$ consists
of all $f\in \mathscr{S}'(\mathbb{R}^n)$ such that the (weighted) modulation space norm
\begin{equation*}
\begin{split}
\|f\|_{\mathcal {M}_{p,q}^{s}}&=\big\|\|V_{\phi}f(x,\xi)\|_{L_{x,p}}\big\|_{L_{\xi,q}^s}
\\&
=\left(\int_{\mathbb{R}^n}\left(\int_{\mathbb{R}^n}|V_{\phi}f(x,\xi)|^{p}dx\right)^{{q}/{p}}\langle \xi\rangle^{sq}dx\right)^{{1}/{q}}
\end{split}
\end{equation*}
is finite, with the usual modifications when $p=\infty$ or $q=\infty$.
This definition is independent of the choice of the window $\phi\in \mathscr{S}$.
\end{definition}
Applying the frequency-uniform localization techniques, one can give an alternative definition of modulation spaces (see \cite{Tribel_modulation_1983}
%, Wang_Hudzik_JDE_2007
 for details).

We denote by $Q_{k}$ the unit cube with the center at $k$. Then the family $\{Q_{k}\}_{k\in\mathbb{Z}^{n}}$
constitutes a decomposition of $\mathbb{R}^{n}$.
Let $\eta \in \mathscr {S}(\mathbb{R}^{n}),$
$\eta: \mathbb{R}^{n} \rightarrow [0,1]$ be a smooth function satisfying that $\eta(\xi)=1$ for
$|\xi|_{\infty}\leqslant {1}/{2}$ and $\eta(\xi)=0$ for $|\xi|\geqslant 3/4$. Let
\begin{equation*}
\eta_{k}(\xi)=\eta(\xi-k),  k\in \mathbb{Z}^{n}
\end{equation*}
be a translation of \ $\eta$.
Since $\eta_{k}(\xi)=1$ in $Q_{k}$, we have that $\sum_{k\in\mathbb{Z}^{n}}\eta_{k}(\xi)\geqslant 1$
for all $\xi\in\mathbb{R}^{n}$. Denote
\begin{equation*}
\sigma_{k}(\xi)=\eta_{k}(\xi)\left(\sum_{l\in\mathbb{Z}^{n}}\eta_{l}(\xi)\right)^{-1},  ~~~~ k\in\mathbb{Z}^{n}.
\end{equation*}
It is easy to know that $\{\sigma_{k}\}_{k\in\mathbb{Z}^{n}}$
constitutes a smooth partition of the unity, and $%
\sigma_{k}(\xi)=\sigma(\xi-k)$. The frequency-uniform decomposition
operators can be defined by
\begin{equation*}
\Box_{k}:= \mathscr{F}^{-1}\sigma_{k}\mathscr{F}
\end{equation*}
for $k\in \mathbb{Z}^{n}$.
Now, we give the (discrete) definition of modulation space $M_{p,q}^s$.

\begin{definition}
Let $s\in \mathbb{R}, 0<p,q\leqslant \infty$. The modulation space $M_{p,q}^s$ consists of all $f\in \mathscr{S}'$ such that the (quasi-)norm
\begin{equation*}
\|f\|_{M_{p,q}^s}:=\left( \sum_{k\in \mathbb{Z}^{n}}\langle k\rangle ^{sq}\|\Box_k f\|_{p}^{q}\right)^{1/q}
\end{equation*}
is finite. We write $M_{p,q}:=M_{p,q}^0$ for short.
We also recall that this definition is independent of the choice of $\{\sigma_k\}_{k\in\mathbb{Z}^n}$ and the definitions of $\mathcal {M}_{p,q}^s$ and $M_{p,q}^s$ are equivalent \cite{Wang_Hudzik_JDE_2007}.
\end{definition}

\begin{definition}\label{Definition, Wiener amalgam space, continuous form}
Let $0<p, q\leqslant \infty$, $s\in \mathbb{R}$.
Given a window function $\phi\in \mathscr{S}\backslash\{0\}$, the Wiener amalgam space $W_{p,q}^s$ consists
of all $f\in \mathscr{S}'(\mathbb{R}^n)$ such that the norm
\begin{equation*}
\begin{split}
\|f\|_{W_{p,q}^{s}}&=\big\|\|V_{\phi}f(x,\xi)\|_{L_{\xi, q}^s}\big\|_{L_{x,p}}
\\&
=\left(\int_{\mathbb{R}^n}\left(\int_{\mathbb{R}^n}|V_{\phi}f(x,\xi)|^{q}\langle \xi\rangle^{sq}d\xi\right)^{{p}/{q}}dx\right)^{{1}/{p}}
\end{split}
\end{equation*}
is finite, with the usual modifications when $p=\infty$ or $q=\infty$. We write $W_{p,q}:=W_{p,q}^0$ for short.
\end{definition}

Now, we state our main results as follows.

\begin{theorem}\label{theorem, boundedness of Hausdorff operator on modulation space}
  Let $1\leqslant p,q\leqslant \infty$, $(1/p-1/2)(1/q-1/p)\geqslant 0$,
  $\Phi$ be a nonnegative function satisfying the basic assumption (\ref{basic assumption for Phi}).
  Then
  $H_{\Phi}$ is bounded on $M_{p,q}$
  if and only if
  \begin{equation*}
    \int_{\mathbb{R}^n}\left( |y|^{n/{p}}+|y|^{n/q'} \right) \Phi(y)dy<\infty.
  \end{equation*}
\end{theorem}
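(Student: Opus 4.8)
\emph{Plan.} The plan is to reduce the whole problem to the dilation behaviour of $M_{p,q}$. Writing $D_\lambda f(x)=f(\lambda x)$ and passing to polar coordinates $y=r\omega$ in \eqref{Definition, Hausdorff operator}, I would first record that $H_\Phi$ is a continuous superposition of dilations,
\[
H_\Phi f=\int_0^{\infty}\psi(r)\,D_{1/r}f\,dr,\qquad
\psi(r):=r^{\,n-1}\int_{S^{n-1}}\Phi(r\omega)\,d\omega ,
\]
so that $\int_{\mathbb R^n}\bigl(|y|^{n/p}+|y|^{n/q'}\bigr)\Phi(y)\,dy=\int_0^{\infty}\bigl(r^{n/p}+r^{n/q'}\bigr)\psi(r)\,dr$, and $H_\Phi D_s=D_s H_\Phi$ for every $s>0$. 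The hypothesis $(1/p-1/2)(1/q-1/p)\ge0$ will be used only through the fact that it is exactly the range of exponents in which the Sugimoto--Tomita dilation estimate on $M_{p,q}$ has the elementary two--term form
\[
\|D_\lambda g\|_{M_{p,q}}\sim\bigl(\lambda^{-n/p}+\lambda^{-n/q'}\bigr)\,\|g\|_{M_{p,q}},\qquad\lambda>0 ;
\]
the upper bound here is a special case of \cite{Sugimoto_Tomita_JFA_2007}, while the lower bound holds for all $p,q$ and is witnessed (for $\lambda\ge1$) by a Gaussian for the exponent $-n/q'$ and by $\mathscr F^{-1}$ of a bump supported in a small ball for the exponent $-n/p$, with the symmetric statement for $\lambda\le1$.

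\emph{Sufficiency.} Granting the dilation estimate, the sufficiency is immediate: since $1\le p,q\le\infty$ the space $M_{p,q}$ is a Banach space, and since $\psi\ge0$ the representation above together with the continuity of $H_\Phi$ proved in the remarks lets me apply Minkowski's integral inequality,
\[
\|H_\Phi f\|_{M_{p,q}}\le\int_0^{\infty}\psi(r)\,\|D_{1/r}f\|_{M_{p,q}}\,dr
\lesssim\Bigl(\int_0^{\infty}\bigl(r^{n/p}+r^{n/q'}\bigr)\psi(r)\,dr\Bigr)\|f\|_{M_{p,q}} ,
\]
which is the asserted bound; the basic assumption \eqref{basic assumption for Phi} guarantees the finiteness of all intermediate quantities and that the vector--valued integral is legitimate after pairing with Schwartz functions.

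\emph{Necessity.} For the necessity I would prove the two conditions $\int_0^\infty r^{n/p}\psi(r)\,dr<\infty$ and $\int_0^\infty r^{n/q'}\psi(r)\,dr<\infty$ separately, each by exhibiting a one--parameter family of test functions on which $H_\Phi$ acts, up to a $\log R$ factor that cancels in the end, as multiplication by the corresponding integral --- the Hausdorff--operator analogue of the $L^p$ argument of \cite{Wu_Chen_SCIChina_2014}. For the first, I would take $f_R(x)=|x|^{-n/p}\chi\!\left(\log|x|/\log R\right)$ with $\chi$ a fixed smooth bump supported away from $0$ and $\infty$; since $f_R$ is concentrated at frequencies $|\xi|\lesssim1$ one has $\|f_R\|_{M_{p,q}}\sim\|f_R\|_{L^p}\sim(\log R)^{1/p}$, while $f_R,\psi\ge0$ and $H_\Phi D_s=D_s H_\Phi$ give $H_\Phi f_R(x)\gtrsim|x|^{-n/p}\int_{I_R}r^{n/p}\psi(r)\,dr$ on a long spherical shell, with $I_R\uparrow(0,\infty)$ as $R\to\infty$; bounding $\|H_\Phi f_R\|_{M_{p,q}}$ from below by $\|\Box_0(H_\Phi f_R)\|_{L^p}$ (using that $H_\Phi f_R$ is frequency--localised near the origin and slowly varying at unit scale) and letting $R\to\infty$ then forces $\int_0^\infty r^{n/p}\psi<\infty$. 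For the second condition I would argue on the Fourier side, using $\widehat{H_\Phi f}=\widetilde{H_\Phi}\widehat f$ and $H_\Phi^{\ast}=\widetilde{H_\Phi}$ from the remarks, and run the same scheme with $g_R=\mathscr F^{-1}\!\left(|\xi|^{-n/q}\chi\!\left(\log|\xi|/\log R\right)\right)$, for which $\|g_R\|_{M_{p,q}}\sim(\log R)^{1/q}$ and $\widehat{H_\Phi g_R}(\xi)\gtrsim|\xi|^{-n/q}\int_{I_R}r^{n/q'}\psi(r)\,dr$ on a long shell in frequency, forcing $\int_0^\infty r^{n/q'}\psi<\infty$. (Note that this half of the argument does not need the range condition, since the two relevant lower bounds for $\|D_\lambda\|_{M_{p,q}}$ hold for all $p,q$.)

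\emph{Main obstacle.} The hard part is the necessity, and specifically the \emph{sharp} evaluation of the modulation--space norms of $f_R$, $g_R$ and of their images under $H_\Phi$: because $M_{p,q}$ is neither solid nor simply behaved under dilations, one cannot compare $\|H_\Phi f_R\|_{M_{p,q}}$ with a weighted $L^p$--norm of $|H_\Phi f_R|$ directly. Instead I would exploit that the functions involved are either frequency--localised near the origin or slowly varying at unit scale, carefully control the edges of the cut--offs (this is why $\chi$ is placed on the logarithmic scale, so that the edge contributions stay of lower order than the main terms $(\log R)^{1/p}$, $(\log R)^{1/q}$), and verify that the frequency--uniform pieces $\Box_k$ see the correct part of the radial, monotone symbols $\widehat{H_\Phi f_R}$ and $\widehat{H_\Phi g_R}$. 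Once these estimates are in place, the $\log R$ device together with $H_\Phi^{\ast}=\widetilde{H_\Phi}$ assembles the four tails --- near $r=0$ and near $r=\infty$, for each of the two powers --- into the two conditions of the theorem.
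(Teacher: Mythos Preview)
Your sufficiency argument is exactly the paper's: Minkowski's inequality plus the Sugimoto--Tomita dilation bound (Lemma~\ref{lemma, dilation property of modulation space}). For the necessity, your test functions $f_R,g_R$ and the $\log R$ cancellation device are the same as those in the paper's Proposition~\ref{proposition, for technique}; the overall strategy is correct.

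Two technical differences are worth noting. First, the paper does not rely on your functions being \emph{approximately} frequency-localised. For the spatial test function it sets $f_N=(\sum_{j}\rho_j(x)|x|^{-n/p})\ast\varphi$ with $\operatorname{supp}\widehat\varphi\subset B(0,1/2)$, so $\widehat{f_N}$ is genuinely supported in a fixed ball, only finitely many $\Box_k$ are nonzero, and $\|f_N\|_{M_{p,q}}\lesssim\|f_N\|_{L^p}$ is immediate; a short pointwise argument shows the convolution does not destroy the lower bound $f_N\gtrsim\sum_j\rho_j|x|^{-n/p}$. This sidesteps the ``slowly varying / edge control'' work you identify as the main obstacle. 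Second, the paper does not lower-bound $\|H_\Phi f_R\|_{M_{p,q}}$ by a single $\|\Box_0(\cdot)\|_{L^p}$ term (which is delicate because $\Box_0$ is not positivity-preserving). Instead it uses the embeddings $M_{p,q}\hookrightarrow L^p$ and $W_{q,p}\hookrightarrow L^q$ (Lemmas~\ref{lemma, embedding relations between Mpp and Lp}, \ref{lemma, embedding relations between Wpp and Lp}) to pass directly to $L^p$ and $L^q$; these embeddings need $1/2\le 1/p\le 1/q$, and the complementary range $1/q\le 1/p\le 1/2$ is then handled by a duality argument via $H_\Phi^{\ast}=\widetilde{H_\Phi}$ and Lemma~\ref{Holder inequality of modulation space and Wiener space}. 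Your route through individual $\Box_k$-terms, if made rigorous, would give the necessity for all $p,q$ without the case split---a mild conceptual gain---but the paper's convolution trick and embedding/duality packaging make the execution considerably shorter at precisely the step you flag as hardest.
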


\begin{theorem}\label{theorem, boundedness of Hausdorff operator on Wiener amalgam space}
  Let $1\leqslant p,q\leqslant \infty$, $(1/q-1/2)(1/q-1/p)\leqslant 0$,
  $\Phi$ be a nonnegative function satisfying the basic assumption (\ref{basic assumption for Phi}).
  Then
  $H_{\Phi}$ is bounded on $W_{p,q}$
  if and only if
  \begin{equation*}
    \int_{\mathbb{R}^n}\left( |y|^{n/{p}}+|y|^{n/q'} \right) \Phi(y)dy<\infty.
  \end{equation*}
\end{theorem}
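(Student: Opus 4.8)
The plan is to deduce Theorem~\ref{theorem, boundedness of Hausdorff operator on Wiener amalgam space} from Theorem~\ref{theorem, boundedness of Hausdorff operator on modulation space} by conjugating $H_{\Phi}$ with the Fourier transform. The starting point is the fundamental identity of time-frequency analysis $|V_{\phi}f(x,\xi)|=|V_{\widehat{\phi}}\widehat{f}(\xi,-x)|$: integrating first in $\xi$ (exponent $q$) and then in $x$ (exponent $p$), and performing the harmless change $x\mapsto-x$, it gives $\|f\|_{W_{p,q}}\asymp\|\widehat{f}\|_{\mathcal{M}_{q,p}}$, so $\mathscr{F}$ is an isomorphism of $W_{p,q}$ onto $\mathcal{M}_{q,p}=M_{q,p}$. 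Combined with the identity $\widehat{H_{\Phi}f}=\widetilde{H_{\Phi}}\widehat{f}$ recorded in the remarks (equivalently $\widetilde{H_{\Phi}}=\mathscr{F}H_{\Phi}\mathscr{F}^{-1}$), this reduces the theorem to showing that $\widetilde{H_{\Phi}}$ is bounded on $M_{q,p}$ if and only if $\int_{\mathbb{R}^n}(|y|^{n/p}+|y|^{n/q'})\Phi(y)\,dy<\infty$.

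Next I would identify $\widetilde{H_{\Phi}}$ as a Hausdorff operator in its own right. Applying the spherical inversion $y=z/|z|^{2}$ (an involution whose Jacobian has absolute value $|z|^{-2n}$) in $\widetilde{H_{\Phi}}f(\xi)=\int_{\mathbb{R}^n}\Phi(y)|y|^{n}f(|y|\xi)\,dy$ yields
\begin{equation*}
  \widetilde{H_{\Phi}}f(\xi)=\int_{\mathbb{R}^n}\Psi(z)f(\xi/|z|)\,dz=H_{\Psi}f(\xi),\qquad \Psi(z):=|z|^{-3n}\Phi\bigl(z/|z|^{2}\bigr).
\end{equation*}
Since both sides are continuous maps $\mathscr{S}\to\mathscr{S}'$ by the remarks, this pointwise identity on $\mathscr{S}$ extends to all of $\mathscr{S}'$. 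One then checks $\Psi\geqslant0$ and, by the same substitution, that $\int_{B(0,1)}|z|^{n}\Psi(z)\,dz=\int_{B(0,1)^{c}}\Phi(y)\,dy$ and $\int_{B(0,1)^{c}}\Psi(z)\,dz=\int_{B(0,1)}|y|^{n}\Phi(y)\,dy$, so $\Psi$ satisfies the basic assumption~(\ref{basic assumption for Phi}) exactly when $\Phi$ does.

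Finally I would invoke Theorem~\ref{theorem, boundedness of Hausdorff operator on modulation space} for $H_{\Psi}$ on $M_{q,p}$, i.e.\ with the roles of $p$ and $q$ interchanged. Its hypothesis $(1/q-1/2)(1/p-1/q)\geqslant0$ is precisely $(1/q-1/2)(1/q-1/p)\leqslant0$, the hypothesis of Theorem~\ref{theorem, boundedness of Hausdorff operator on Wiener amalgam space}. That theorem then gives boundedness of $H_{\Psi}$ on $M_{q,p}$ iff $\int_{\mathbb{R}^n}(|z|^{n/q}+|z|^{n/p'})\Psi(z)\,dz<\infty$, and reversing the inversion substitution turns $\int|z|^{n/q}\Psi(z)\,dz$ into $\int|y|^{n/q'}\Phi(y)\,dy$ and $\int|z|^{n/p'}\Psi(z)\,dz$ into $\int|y|^{n/p}\Phi(y)\,dy$, so the condition becomes exactly $\int_{\mathbb{R}^n}(|y|^{n/p}+|y|^{n/q'})\Phi(y)\,dy<\infty$.

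I expect the main obstacle to be purely technical: handling the endpoints $p=\infty$ or $q=\infty$, where $\mathscr{S}$ is not norm dense, so that ``bounded on $W_{p,q}$'' must be read as an a priori inequality on an $\mathscr{F}$-stable dense (or weak-$\ast$ dense) subspace, and verifying throughout that the Fourier conjugation and the inversion substitution are genuinely compatible with the distributional definitions of $H_{\Phi}$, $\widetilde{H_{\Phi}}$ and $H_{\Psi}$ fixed in the remarks, so that the three boundedness statements are really equivalent.
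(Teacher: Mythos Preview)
Your argument is correct and is in fact cleaner than the paper's. The paper does \emph{not} reduce Theorem~\ref{theorem, boundedness of Hausdorff operator on Wiener amalgam space} to Theorem~\ref{theorem, boundedness of Hausdorff operator on modulation space}; instead it reruns the same machinery. For the ``if'' part it uses the same Fourier conjugation $\|H_{\Phi}f\|_{W_{p,q}}\sim\|H_{\Phi}^{*}\widehat{f}\|_{M_{q,p}}$ that you use, but then applies Minkowski and the dilation estimate of Lemma~\ref{lemma, dilation property of modulation space} directly to $\widehat{f}(|y|\,\cdot)$ rather than invoking the modulation theorem. For the ``only if'' part it again splits into the case $1/2\leqslant 1/q\leqslant 1/p\leqslant 1$ (handled by the embeddings $W_{p,q}\hookrightarrow L^{p}$, $M_{q,p}\hookrightarrow L^{q}$ and the test-function Proposition~\ref{proposition, for technique}) and the dual case $1/p\leqslant 1/q\leqslant 1/2$. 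Your key observation---that the spherical inversion $y\mapsto z/|z|^{2}$ turns $\widetilde{H_{\Phi}}$ into a genuine Hausdorff operator $H_{\Psi}$ with $\Psi(z)=|z|^{-3n}\Phi(z/|z|^{2})$, and that this involution exchanges the two halves of assumption~(\ref{basic assumption for Phi}) and sends the exponent pair $(n/q,\,n/p')$ to $(n/q',\,n/p)$---is not in the paper. It buys you a one-line reduction to the modulation result, at the cost of nothing beyond a Jacobian computation; the paper's route is more self-contained but essentially duplicates the proof of Theorem~\ref{theorem, boundedness of Hausdorff operator on modulation space}. The endpoint density caveat you raise is genuine but applies equally to both proofs and is not addressed explicitly in the paper either.
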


Our paper is organized as follows.
In Section 2, we collect some basic properties of modulation and Wiener amalgam spaces and give
 the proof of Theorem \ref{theorem, boundedness of Hausdorff operator on modulation space} and \ref{theorem, boundedness of Hausdorff operator on Wiener amalgam space}.

Throughout this paper, we will adopt the following notations.
We use $X\lesssim Y$ to denote the statement that $X\leqslant CY$, with a positive constant $C$ that may depend on $n, \,p$,
but it might be different from line to line.
The notation $X\sim Y$ means the statement $X\lesssim Y\lesssim X$.
We use $X\lesssim_{\lambda}Y$ to denote $X\leqslant C_{\lambda}Y$,
meaning that the implied constant $C_{\lambda}$ depends on the parameter $\lambda$.
For a multi-index $k=(k_{1},k_{2},...,k_{n})\in \mathbb{Z}^{n}$,
we denote $|k|_{\infty }:=\max\limits_{i=1,2,...,n}|k_{i}|$ and $\langle k\rangle:=(1+|k|^{2})^{{1}/{2}}.$

\section{Proof of main theorem }
First,
we list some basic properties about modulation spaces as follows.
\begin{lemma}[Symmetry of time and frequency]\label{lemma, relation of Mpp Wpp}
  $\|\mathscr{F}^{-1}f\|_{M_{p,q}}\sim\|f\|_{W_{q,p}}\sim\|\mathscr{F}f\|_{M_{p,q}}$.
\end{lemma}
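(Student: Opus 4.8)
The plan is to work with the short-time Fourier transform descriptions of both spaces and to exploit how $V_{\phi}$ transforms under the Fourier transform. The single ingredient needed is the \emph{fundamental identity of time-frequency analysis}: for $f\in\mathscr{S}'$ and any window $\phi\in\mathscr{S}\setminus\{0\}$,
\begin{equation*}
  V_{\phi}f(x,\xi)=e^{-2\pi i x\cdot\xi}\,V_{\widehat{\phi}}\widehat{f}(\xi,-x),\qquad\text{so that}\qquad |V_{\phi}f(x,\xi)|=|V_{\widehat{\phi}}\widehat{f}(\xi,-x)|.
\end{equation*}
I would prove this by writing $V_{\phi}f(x,\xi)=\langle f,M_{\xi}T_x\phi\rangle$, applying Parseval's formula to obtain $\langle\widehat{f},\widehat{M_{\xi}T_x\phi}\rangle$, and using $\widehat{M_{\xi}T_x\phi}(\omega)=e^{-2\pi i(\omega-\xi)\cdot x}\widehat{\phi}(\omega-\xi)$ to recognize the resulting integral as a modulated copy of $V_{\widehat{\phi}}\widehat{f}(\xi,-x)$; this is routine bookkeeping with unimodular factors.

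For the first equivalence, put $g=\mathscr{F}^{-1}f$, so that $\widehat{g}=f$ and $|V_{\phi}g(x,\xi)|=|V_{\widehat{\phi}}f(\xi,-x)|$. Substituting into
\begin{equation*}
  \|\mathscr{F}^{-1}f\|_{M_{p,q}}=\left(\int_{\mathbb{R}^n}\left(\int_{\mathbb{R}^n}|V_{\phi}g(x,\xi)|^{p}\,dx\right)^{q/p}d\xi\right)^{1/q}
\end{equation*}
and making the change of variables $x\mapsto-x$ in the inner integral turns the right-hand side into $\big(\int(\int|V_{\widehat{\phi}}f(\xi,x)|^{p}\,dx)^{q/p}\,d\xi\big)^{1/q}$. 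Relabelling $\xi$ as the (outer) translation variable and $x$ as the (inner) modulation variable, this is exactly the definition of $\|f\|_{W_{q,p}}$ computed with the window $\widehat{\phi}$ in place of $\phi$ — the interchange of the roles of the translation and modulation variables is precisely the difference between the $M$- and the $W$-norms. Since the modulation- and Wiener-amalgam-space (quasi-)norms do not depend on the admissible window up to equivalence, we conclude $\|\mathscr{F}^{-1}f\|_{M_{p,q}}\sim\|f\|_{W_{q,p}}$.

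For the second equivalence I would run the same computation with $f$ replaced by $\mathscr{F}f$. Since $\widehat{\mathscr{F}f}(\xi)=f(-\xi)$, a short calculation with the fundamental identity (absorbing the reflection into the window) gives $|V_{\phi}(\mathscr{F}f)(x,\xi)|=|V_{\mathscr{F}^{-1}\phi}f(-\xi,x)|$; substituting this into the $M_{p,q}$-norm and changing variables $\xi\mapsto-\xi$ identifies $\|\mathscr{F}f\|_{M_{p,q}}$ with $\|f\|_{W_{q,p}}$ computed with window $\mathscr{F}^{-1}\phi$, and window-independence again closes the argument. (Equivalently, one may deduce the second equivalence from the first applied to $\mathscr{F}^{2}f$, using that $f\mapsto f(-\,\cdot\,)$ leaves the $W_{q,p}$-norm invariant up to equivalence.) The only step that demands care is keeping track of the phase factors and the sign/coordinate swaps in the fundamental identity and in the two changes of variables; once these are pinned down the statement is immediate, and the sole genuine prerequisite is the window-independence of the two norms on the relevant parameter range, which is standard.
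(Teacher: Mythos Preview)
Your proposal is correct and follows essentially the same approach as the paper: the paper's proof is simply the one-line observation that $|V_{\phi}f(x,\xi)|=|V_{\hat{\phi}}\hat{f}(\xi,-x)|$ together with the definitions of $M_{p,q}$ and $W_{q,p}$, and your argument is exactly the detailed unpacking of this. Your additional care in handling the second equivalence (tracking the reflection via $\widehat{\mathscr{F}f}=f(-\,\cdot\,)$ and absorbing it into the window $\mathscr{F}^{-1}\phi$) is more explicit than the paper, but the underlying idea is identical.
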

\begin{proof}
  By the fact that
  \begin{equation*}
    |V_{\phi }f(x,\xi )|=|V_{\hat{\phi}}\hat{f}(\xi ,-x)|,
  \end{equation*}
  the conclusion follows by the definition of modulation and Wiener amalgam space.
\end{proof}

\begin{lemma}[\cite{Sugimoto-Tomita-JFA-2007} Dilation property of modulation space] \label{lemma, dilation property of modulation space}
  Let $1\leqslant p,q\leqslant \infty$, $(1/p-1/2)(1/q-1/p)\geq 0$.
  Set $f_{\lambda}(x)=f(\lambda x)$.
  Then
  \begin{equation*}
    \|f_{\lambda}\|_{M_{p,q}}\lesssim \max\{\lambda^{-n/p}, \lambda^{-n/{q'}}\}\|f\|_{M_{p,q}}.
  \end{equation*}
\end{lemma}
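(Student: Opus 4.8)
We follow \cite{Sugimoto-Tomita-JFA-2007}. The plan is to pass to the Fourier side, reduce to the case $\lambda\geq1$ by duality, and then treat separately the two corner regions into which the hypothesis $(1/p-1/2)(1/q-1/p)\geq0$ splits. Since $\widehat{f_{\lambda}}(\xi)=\lambda^{-n}\widehat f(\lambda^{-1}\xi)$, a change of variables gives $\Box_{k}f_{\lambda}=(g_{k})_{\lambda}$ with $\widehat{g_{k}}(\zeta)=\sigma(\lambda\zeta-k)\widehat f(\zeta)$ supported in a cube $R_{k}$ of sidelength $\sim\min\{1,\lambda^{-1}\}$ centred at $\lambda^{-1}k$, so $\|\Box_{k}f_{\lambda}\|_{p}=\lambda^{-n/p}\|g_{k}\|_{p}$ and we must bound $\big(\sum_{k}\|g_{k}\|_{p}^{q}\big)^{1/q}$ by $\|f\|_{M_{p,q}}$. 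Writing $D_{\lambda}f=f_{\lambda}$, the relations $D_{\lambda}^{*}=\lambda^{-n}D_{1/\lambda}$, $(M_{p,q})^{*}=M_{p',q'}$, and the invariance of the hypothesis under $(p,q)\mapsto(p',q')$ reduce the range $\lambda<1$ to $\lambda\geq1$ (for $p,q<\infty$; the remaining cases are checked directly). For $\lambda\geq1$ the target bound is $\|f_{\lambda}\|_{M_{p,q}}\lesssim\lambda^{-n\min\{1/p,1/q'\}}\|f\|_{M_{p,q}}$, which equals $\max\{\lambda^{-n/p},\lambda^{-n/q'}\}$ in that range, and the hypothesis means either $q\geq p\geq2$ (so $\min\{1/p,1/q'\}=1/p$) or $q\leq p\leq2$ (so $\min\{1/p,1/q'\}=1/q'$).

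Fix $\lambda\geq1$. Since $\widehat{g_{k}}$ lives on $R_{k}$, one may write $g_{k}=S_{k}\Box_{j(k)}^{\sim}f$, where $j(k)$ is (up to $O(1)$ neighbours) the lattice point nearest $\lambda^{-1}k$, $S_{k}$ is Fourier multiplication by $\sigma(\lambda\cdot-k)$, and $\Box_{j}^{\sim}$ is a slightly enlarged frequency-uniform projection; the cubes $\{R_{k}\}$ have bounded overlap and, for each $j$, $\#\{k:j(k)=j\}\sim\lambda^{n}$. In the case $q\geq p\geq2$ we estimate
\[
\sum_{k}\|g_{k}\|_{p}^{q}=\sum_{j}\sum_{k:\,j(k)=j}\|S_{k}\Box_{j}^{\sim}f\|_{p}^{q}\ \lesssim\ \sum_{j}\Big(\sum_{k:\,j(k)=j}\|S_{k}\Box_{j}^{\sim}f\|_{p}^{p}\Big)^{q/p}\ \lesssim\ \sum_{j}\big\|\Box_{j}^{\sim}f\big\|_{p}^{q},
\]
using $\ell^{p}\hookrightarrow\ell^{q}$ in the first inequality and, in the second, the pointwise bound $(\sum_{k}|S_{k}h|^{p})^{1/p}\leq(\sum_{k}|S_{k}h|^{2})^{1/2}$ (valid for $p\geq2$) together with the reverse square-function (Littlewood--Paley) inequality $\|(\sum_{k}|S_{k}h|^{2})^{1/2}\|_{p}\lesssim\|h\|_{p}$ for the uniform cube decomposition, $p\geq2$. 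Hence $\|f_{\lambda}\|_{M_{p,q}}=\lambda^{-n/p}\big(\sum_{k}\|g_{k}\|_{p}^{q}\big)^{1/q}\lesssim\lambda^{-n/p}\|f\|_{M_{p,q}}$, as required.

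In the case $q\leq p\leq2$ we use Hausdorff--Young instead: $\|g_{k}\|_{p}\lesssim\|\widehat{g_{k}}\|_{p'}=\|\sigma(\lambda\cdot-k)\widehat{\Box_{j(k)}^{\sim}f}\|_{p'}$. Since $q\leq p\leq2\leq p'$ and there are $\sim\lambda^{n}$ indices $k$ with $j(k)=j$, H\"older over those indices together with the bounded-overlap estimate $\sum_{k:\,j(k)=j}\|\sigma(\lambda\cdot-k)\widehat{\Box_{j}^{\sim}f}\|_{p'}^{p'}\lesssim\|\widehat{\Box_{j}^{\sim}f}\|_{p'}^{p'}\lesssim\|\Box_{j}^{\sim}f\|_{p}^{p'}$ yields $\sum_{k:\,j(k)=j}\|g_{k}\|_{p}^{q}\lesssim\lambda^{n(1-q/p')}\|\Box_{j}^{\sim}f\|_{p}^{q}$. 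Summing in $j$ and using $1/p+1/p'=1$ gives $\|f_{\lambda}\|_{M_{p,q}}\lesssim\lambda^{-n/p}\lambda^{n(1/q-1/p')}\|f\|_{M_{p,q}}=\lambda^{-n/q'}\|f\|_{M_{p,q}}$, which is again the claim.

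The step I expect to be the main obstacle is the reverse square-function inequality for the uniform decomposition used in the case $q\geq p\geq2$: for $p=2$ it is merely Plancherel, but for $p>2$ it is a genuine (Littlewood--Paley / Rubio de Francia type) fact about Fourier projections onto a lattice of congruent cubes, which must be cited or reproved (and its constant shown to be scale-invariant, hence uniform in $\lambda$). A secondary, purely technical point is the bookkeeping behind the decomposition $g_{k}=S_{k}\Box_{j(k)}^{\sim}f$: one must fix the enlarged projections $\Box_{j}^{\sim}$ and the index sets so that each $g_{k}$ involves only $O(1)$ of the pieces $\Box_{j}^{\sim}f$, the overlaps of $\{R_{k}\}$ are bounded independently of $\lambda$, and $\#\{k:j(k)=j\}\sim\lambda^{n}$ with uniform constants; finally, the duality reduction to $\lambda\geq1$ has to be supplemented by a direct check when $p$ or $q$ is infinite.
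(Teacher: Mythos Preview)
The paper does not prove this lemma at all; it is quoted verbatim from \cite{Sugimoto-Tomita-JFA-2007}, so there is nothing to compare your argument to on that front. Your sketch for the region $q\geq p\geq 2$, $\lambda\geq 1$ is sound (the reverse square-function estimate you flag is indeed Rubio de Francia for congruent cubes, valid with a scale-free constant), but there is a genuine error in the companion case.

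In the case $q\leq p\leq 2$ you write $\|g_{k}\|_{p}\lesssim\|\widehat{g_{k}}\|_{p'}$ and call it Hausdorff--Young. Hausdorff--Young for $p\le 2$ says $\|\widehat{h}\|_{p'}\le\|h\|_{p}$, which is the \emph{opposite} direction; the inequality you need would be $\mathscr F^{-1}:L^{p'}\to L^{p}$, and that requires $p'\le 2$, i.e.\ $p\ge 2$. Nor does compact Fourier support save you: already at the endpoint $p=1$ (which is in your range, e.g.\ $(p,q)=(1,1)$) the claim becomes $\|g\|_{1}\lesssim\|\widehat g\|_{\infty}$ for band-limited $g$, and $g=\mathrm{sinc}$ is a counterexample. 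Since every subsequent step in that paragraph rests on this bound, the whole $q\leq p\leq 2$ branch collapses.

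One clean repair is to reorganise the duality: instead of using $D_{\lambda}^{*}=\lambda^{-n}D_{1/\lambda}$ to trade $\lambda<1$ for $\lambda\geq 1$ at fixed $(p,q)$, use it to trade the region $q\le p\le 2$ (all $\lambda$) for the region $q'\ge p'\ge 2$ (all $\lambda$). That means you must prove the $q\ge p\ge 2$ estimate for \emph{both} $\lambda\ge 1$ and $\lambda\le 1$ directly; the $\lambda\le 1$ sub-case needs a different counting (now each $g_k$ spreads over $\sim\lambda^{-n}$ of the $\Box_j f$, rather than the other way round) and is handled in \cite{Sugimoto-Tomita-JFA-2007} by an interpolation argument rather than by a single square-function step.
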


\begin{lemma}[\cite{Kobayashi_Sugimoto_JFA_2011} Embedding relations between modulation and Lebesgue spaces]\label{lemma, embedding relations between Mpp and Lp}
The following embedding relations are right:
\begin{enumerate}
  \item $M_{p,q}\hookrightarrow L^p$ for $1/q\geqslant1/p\geqslant1/2$;
  \item $L^p\hookrightarrow M_{p,q}$ for $1/q\leqslant 1/p\leqslant 1/2$.
\end{enumerate}
\end{lemma}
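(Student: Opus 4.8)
The plan is to establish assertion (1) first and then derive (2) from it by duality. For (1), I would begin by reducing to the diagonal case: since $1/q\geqslant 1/p$ means $q\leqslant p$, we have $\ell^q\hookrightarrow\ell^p$, hence $\|f\|_{M_{p,p}}\leqslant\|f\|_{M_{p,q}}$, so it suffices to prove $M_{p,p}\hookrightarrow L^p$ for $1\leqslant p\leqslant 2$. The two endpoints are immediate. When $p=1$, expanding $f=\sum_k\Box_k f$ and applying the triangle inequality gives $\|f\|_{L^1}\leqslant\sum_k\|\Box_k f\|_{L^1}=\|f\|_{M_{1,1}}$. When $p=2$, Plancherel's theorem together with $0\leqslant\sigma_k\leqslant 1$, $\sum_k\sigma_k\equiv 1$ and the bounded overlap of $\{\sigma_k\}$ gives $\sum_k\|\Box_k f\|_{L^2}^2=\int_{\mathbb R^n}\big(\sum_k\sigma_k(\xi)^2\big)|\widehat f(\xi)|^2\,d\xi\sim\|f\|_{L^2}^2$, i.e.\ $M_{2,2}=L^2$ with equivalent norms.

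For the intermediate range $1<p<2$ I would interpolate. Picking $\theta\in(0,1)$ with $1/p=(1-\theta)+\theta/2$, complex interpolation yields $(M_{1,1},M_{2,2})_{[\theta]}=M_{p,p}$ and $(L^1,L^2)_{[\theta]}=L^p$ with equivalent norms; since the identity map is bounded $M_{1,1}\hookrightarrow L^1$ and $M_{2,2}\hookrightarrow L^2$, it is bounded between the interpolation couples, giving $M_{p,p}\hookrightarrow L^p$ and hence (1). A direct Hausdorff--Young argument is not available, because $\|f\|_{L^p}\lesssim\|\widehat f\|_{L^{p'}}$ fails for $p<2$; interpolation is what takes its place.

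To get (2) I would dualize (1). Throughout the range of (1) all exponents are finite ($q\leqslant p\leqslant 2$), $\mathscr S$ is dense in both $M_{p,q}$ and $L^p$, and with respect to the pairing extending $\langle\cdot,\cdot\rangle$ on $\mathscr S\times\mathscr S$ one has $(M_{p,q})^{*}=M_{p',q'}$ and $(L^p)^{*}=L^{p'}$; transposing the embedding $M_{p,q}\hookrightarrow L^p$ therefore gives $L^{p'}\hookrightarrow M_{p',q'}$. The condition $1/q\geqslant 1/p\geqslant 1/2$ turns into $1/q'\leqslant 1/p'\leqslant 1/2$, which is precisely (2) after relabelling, for finite $q$; the remaining cases ($q=\infty$, or $p=\infty$) follow from the elementary estimate $\|\Box_k f\|_{L^r}=\|(\mathscr F^{-1}\sigma_k)*f\|_{L^r}\leqslant\|\mathscr F^{-1}\sigma\|_{L^1}\|f\|_{L^r}$, which yields $L^r\hookrightarrow M_{r,\infty}$ for every $r$. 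I expect the main obstacle to be the interpolation identity $(M_{1,1},M_{2,2})_{[\theta]}=M_{p,p}$: it is standard but is not recorded in this excerpt, and if one wishes to bypass it, one can instead phrase (1) as the $\ell^p(L^p)\to L^p$ boundedness of the sublinear map $(g_k)_k\mapsto\big\|\sum_k g_k\big\|_{L^p}$ on sequences whose $k$-th term has Fourier transform supported in $k+[-1,1]^n$, which is trivial for $p=1$ and a consequence of almost-orthogonality for $p=2$, so that only the textbook identity $(\ell^1(L^1),\ell^2(L^2))_{[\theta]}=\ell^p(L^p)$ is needed.
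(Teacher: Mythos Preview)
The paper does not supply a proof of this lemma; it is simply quoted from \cite{Kobayashi_Sugimoto_JFA_2011}, so there is nothing to compare your argument against on the paper's side.

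Your proof is correct and follows the standard route. The reduction $M_{p,q}\hookrightarrow M_{p,p}$ via $\ell^q\hookrightarrow\ell^p$ is fine, the endpoint cases $p=1$ (triangle inequality) and $p=2$ (Plancherel plus bounded overlap of the $\sigma_k$) are handled properly, and complex interpolation fills in $1<p<2$. Your remark that $(M_{1,1},M_{2,2})_{[\theta]}=M_{p,p}$ is the only nontrivial input is accurate; this identity goes back to Feichtinger and is recorded, for instance, in Gr\"{o}chenig's monograph \cite{Grochenig}, so invoking it is legitimate in this context. The alternative you sketch via $(\ell^1(L^1),\ell^2(L^2))_{[\theta]}=\ell^p(L^p)$ also works and has the virtue of avoiding any modulation-space-specific interpolation theory. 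The duality step for (2) is correct as well: since in (1) one has $q\leqslant p\leqslant 2$, both exponents are finite, $\mathscr{S}$ is dense, and $(M_{p,q})^{*}=M_{p',q'}$ applies; your separate treatment of the boundary cases $q=\infty$ (and $p=\infty$) via $\|\Box_k f\|_{L^r}\lesssim\|f\|_{L^r}$ is the clean way to close the argument.
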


\begin{lemma}[\cite{Cunanan_Kobayashi-Sugimoto_JFA_2015} Embedding relations between Wiener amalgam and Lebesgue spaces]\label{lemma, embedding relations between Wpp and Lp}
The following embedding relations are right:
\begin{enumerate}
  \item $W_{p,q}\hookrightarrow L^p$ for $1/p\geqslant1/q\geqslant1/2$;
  \item $L^p\hookrightarrow W_{p,q}$ for $1/p\leqslant 1/q\leqslant 1/2$.
\end{enumerate}
\end{lemma}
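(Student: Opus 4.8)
The plan is to derive the second embedding from the first by duality, and to prove the first by interpolating between three explicit endpoint cases.

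\emph{Duality reduction.} For $1<p,q<\infty$ one has $(W_{p,q})^{*}=W_{p',q'}$ and $(L^{p})^{*}=L^{p'}$, with $\mathscr{S}$ dense in all four spaces; hence $W_{p,q}\hookrightarrow L^{p}$ holds if and only if $L^{p'}\hookrightarrow W_{p',q'}$ does, and the substitution $(p,q)\mapsto(p',q')$ carries the constraint $1/p\geqslant 1/q\geqslant 1/2$ of part (1) precisely onto $1/p'\leqslant 1/q'\leqslant 1/2$ of part (2). So I would only prove part (1), i.e.\ $W_{p,q}\hookrightarrow L^{p}$ when $1/p\geqslant 1/q\geqslant 1/2$, and treat the residual endpoint cases of part (2) with $p=\infty$ or $q=\infty$ by hand (for instance $\|f\|_{M_{\infty,\infty}}=\sup_{x,\xi}|V_{\phi}f(x,\xi)|\lesssim\|f\|_{L^{\infty}}$ gives $L^{\infty}\hookrightarrow W_{\infty,q}$).

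\emph{Reduction to corners.} In the $(1/p,1/q)$ plane the set $\{1/p\geqslant 1/q\geqslant 1/2\}$ is the closed triangle with vertices $(1,1)$, $(1,1/2)$ and $(1/2,1/2)$, that is $(p,q)\in\{(1,1),(1,2),(2,2)\}$. I would verify $W_{p,q}\hookrightarrow L^{p}$ at these three corners and then apply complex interpolation, using $[W_{p_{0},q_{0}},W_{p_{1},q_{1}}]_{\theta}=W_{p,q}$ and $[L^{p_{0}},L^{p_{1}}]_{\theta}=L^{p}$ (with the usual relations among the indices) twice --- first along one edge, then from that edge to the opposite vertex --- to reach every point of the triangle. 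The corner estimates are elementary. At $(p,q)=(2,2)$, the orthogonality relation $\|V_{\phi}f\|_{L^{2}(\mathbb{R}^{2n})}=\|\phi\|_{L^{2}}\|f\|_{L^{2}}$ together with Fubini gives $\|f\|_{W_{2,2}}=\|V_{\phi}f\|_{L^{2}(\mathbb{R}^{2n})}\sim\|f\|_{L^{2}}$, so $W_{2,2}=L^{2}$. At $(p,q)=(1,1)$, Fubini gives $\|f\|_{W_{1,1}}=\|V_{\phi}f\|_{L^{1}(\mathbb{R}^{2n})}=\|f\|_{M_{1,1}}$, and $M_{1,1}\hookrightarrow L^{1}$ by Lemma \ref{lemma, embedding relations between Mpp and Lp}(1). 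At $(p,q)=(1,2)$, I would use the inversion formula $f=\|\phi\|_{L^{2}}^{-2}\int_{\mathbb{R}^{n}}\int_{\mathbb{R}^{n}}V_{\phi}f(x,\xi)M_{\xi}T_{x}\phi\,d\xi\,dx$: its inner integral in $\xi$ equals $T_{x}\phi\cdot(V_{\phi}f(x,\cdot))^{\vee}$, whose $L^{1}$ norm is at most $\|\phi\|_{L^{2}}\|V_{\phi}f(x,\cdot)\|_{L^{2}_{\xi}}$ by Cauchy--Schwarz and Plancherel, and integration in $x$ then yields $\|f\|_{L^{1}}\lesssim\|f\|_{W_{1,2}}$.

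\emph{Main obstacle.} I expect the only delicate point to be the interpolation step, namely having available the complex interpolation identity for the Wiener amalgam spaces $W_{p,q}$ throughout the range $1\leqslant p,q\leqslant 2$ and organizing the triangular region as a double interpolation; the rest is routine. If one prefers to bypass interpolation, one can argue locally instead: on bounded sets $\mathscr{F}L^{q}\hookrightarrow L^{q'}\hookrightarrow L^{p}$ whenever $q\leqslant 2$ and $p\leqslant q$, so $W_{p,q}=W(\mathscr{F}L^{q},L^{p})\hookrightarrow W(L^{p},L^{p})=L^{p}$; and since the lemma is exactly the embedding theorem of \cite{Cunanan_Kobayashi-Sugimoto_JFA_2015}, it can also simply be cited.
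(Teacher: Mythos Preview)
The paper does not prove this lemma at all: it is stated with a citation to \cite{Cunanan_Kobayashi-Sugimoto_JFA_2015} and used as a black box. So you are supplying strictly more than the paper does, and your overall scheme --- duality to reduce (2) to (1), verification at the three corners $(p,q)\in\{(1,1),(1,2),(2,2)\}$, then complex interpolation over the triangle --- is a correct and standard route. Your corner computations are fine, and the interpolation identity $[W_{p_0,q_0},W_{p_1,q_1}]_\theta=W_{p,q}$ for $1\leqslant p_i,q_i<\infty$ is indeed known (Feichtinger), so the ``main obstacle'' you flag is not a real obstruction. Your alternative local argument via $\mathscr{F}L^{q}\hookrightarrow L^{q'}\hookrightarrow L^{p}$ on bounded sets, hence $W(\mathscr{F}L^{q},L^{p})\hookrightarrow W(L^{p},L^{p})=L^{p}$, is also valid and is closer in spirit to how the cited paper proceeds.

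One small correction at the endpoint: the sentence ``$\|f\|_{M_{\infty,\infty}}\lesssim\|f\|_{L^{\infty}}$ gives $L^{\infty}\hookrightarrow W_{\infty,q}$'' does not work as written, since $W_{\infty,\infty}\hookrightarrow W_{\infty,q}$ fails for $q<\infty$. The right one-line fix is Hausdorff--Young: for $q\geqslant 2$,
\[
\|V_{\phi}f(x,\cdot)\|_{L^{q}_{\xi}}=\|\mathscr{F}(f\,\overline{T_{x}\phi})\|_{L^{q}}\leqslant\|f\,\overline{T_{x}\phi}\|_{L^{q'}}\leqslant\|f\|_{L^{\infty}}\|\phi\|_{L^{q'}},
\]
and taking $\sup_{x}$ gives $L^{\infty}\hookrightarrow W_{\infty,q}$. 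With this tweak your endpoint handling is complete.
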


\begin{lemma}\label{Holder inequality of modulation space and Wiener space}
  Let $1\leqslant p,q \leqslant\infty$. We have
  \begin{enumerate}
    \item
    \begin{equation*}
      \begin{split}
        \left|\int_{\mathbb{R}^n}f(x)\bar{g}(x)dx\right|
        \leqslant
        \|f\|_{M_{p',q'}}\|g\|_{M_{p,q}}
      \end{split}
    \end{equation*}
    \item
    \begin{equation*}
      \begin{split}
        \left|\int_{\mathbb{R}^n}f(x)\bar{g}(x)dx\right|
        \leqslant
        \|f\|_{W_{q',p'}}\|g\|_{W_{q,p}}.
      \end{split}
    \end{equation*}
  \end{enumerate}
\end{lemma}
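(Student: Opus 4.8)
The plan is to deduce both estimates from H\"older's inequality on mixed-norm Lebesgue spaces by passing through the short-time Fourier transform. Fix once and for all a window $\phi\in\mathscr{S}\setminus\{0\}$ with $\|\phi\|_{L^2}=1$; since the modulation and Wiener amalgam norms do not depend on the admissible window, this costs nothing. The key ingredient is Moyal's (orthogonality) relation for the STFT, $\langle f|g\rangle=\langle V_\phi f|V_\phi g\rangle_{L^2(\mathbb{R}^{2n})}$, that is,
\begin{equation*}
\int_{\mathbb{R}^n}f(x)\overline{g(x)}\,dx=\int_{\mathbb{R}^n}\int_{\mathbb{R}^n}V_\phi f(x,\xi)\,\overline{V_\phi g(x,\xi)}\,dx\,d\xi .
\end{equation*}
I would first record this for $f,g\in\mathscr{S}$, where it is classical, and then treat the general case by the density/extension argument indicated at the end; in particular it suffices to prove the two inequalities for Schwartz $f,g$.

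For part (1), taking absolute values inside the double integral gives $|\langle f|g\rangle|\le\int_{\mathbb{R}^n}\int_{\mathbb{R}^n}|V_\phi f(x,\xi)|\,|V_\phi g(x,\xi)|\,dx\,d\xi$. Applying H\"older's inequality in the $x$ variable with the conjugate pair $(p',p)$ bounds the inner integral by $\|V_\phi f(\cdot,\xi)\|_{L^{p'}}\,\|V_\phi g(\cdot,\xi)\|_{L^{p}}$, and a second application of H\"older in $\xi$ with the conjugate pair $(q',q)$ produces $\bigl\|\,\|V_\phi f(\cdot,\xi)\|_{L^{p'}}\,\bigr\|_{L^{q'}_\xi}\,\bigl\|\,\|V_\phi g(\cdot,\xi)\|_{L^{p}}\,\bigr\|_{L^{q}_\xi}$, which is precisely $\|f\|_{M_{p',q'}}\|g\|_{M_{p,q}}$ by the continuous definition of the modulation norm. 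The endpoint values $p\in\{1,\infty\}$ and $q\in\{1,\infty\}$ are covered by the usual $\mathrm{ess\,sup}$ modifications.

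For part (2) one argues identically but integrates in the reverse order: starting from $|\langle f|g\rangle|\le\int_{\mathbb{R}^n}\int_{\mathbb{R}^n}|V_\phi f(x,\xi)|\,|V_\phi g(x,\xi)|\,d\xi\,dx$, apply H\"older first in $\xi$ with the pair $(p',p)$ and then in $x$ with the pair $(q',q)$. Because in Definition~\ref{Definition, Wiener amalgam space, continuous form} the inner norm is taken in $\xi$ and the outer norm in $x$, the two resulting mixed norms are $\|f\|_{W_{q',p'}}$ and $\|g\|_{W_{q,p}}$, which explains the transposition of indices in the statement.

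The only genuinely delicate point is the meaning of $\int f\bar g$, and the validity of Moyal's identity, when $f\in M_{p',q'}$ and $g\in M_{p,q}$ (or their amalgam analogues) are merely tempered distributions: the pointwise product $f\bar g$ need not be integrable a priori. I would circumvent this by defining $\langle f|g\rangle$ as the continuous extension of the bilinear pairing from $\mathscr{S}\times\mathscr{S}$, using the density of $\mathscr{S}$ in the spaces with finite exponents together with the inequalities just proved (and noting that, once the right-hand side is finite, the H\"older estimate forces $V_\phi f\cdot\overline{V_\phi g}\in L^1(\mathbb{R}^{2n})$, so the double integral above converges absolutely). In every application of this lemma later in the paper at least one of the two functions is a Schwartz function, so this subtlety does not actually intervene there.
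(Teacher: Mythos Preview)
Your argument is correct but follows a genuinely different route from the paper. The paper works with the \emph{discrete} definition: it uses Plancherel to write $\int f\bar g=\int\hat f\,\overline{\hat g}$, inserts the partition $\sum_k\sigma_k$ on the frequency side, observes that only neighbouring blocks interact so that $\int f\bar g=\sum_k\int(\Box_kf)\,\overline{\Box_k^{\ast}g}$, and then applies H\"older first in $x$ and then in $k$; the $\Box_k^{\ast}$-norm is absorbed into $\|g\|_{M_{p,q}}$ by the fact that the discrete norm is independent of the chosen decomposition. Part~(2) is not argued directly but is reduced to part~(1) via Lemma~\ref{lemma, relation of Mpp Wpp} and Plancherel.

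Your approach instead uses the \emph{continuous} definition together with Moyal's orthogonality relation $\langle f|g\rangle=\langle V_\phi f|V_\phi g\rangle_{L^2(\mathbb{R}^{2n})}$ for a window normalized by $\|\phi\|_{L^2}=1$, followed by two applications of H\"older on the phase-space integral; for (2) you simply reverse the order of the H\"older steps rather than invoking the Fourier symmetry. This is cleaner in one respect: with a normalized window it yields the inequality with constant exactly $1$, whereas the paper's argument implicitly picks up an equivalence constant when passing from the $\Box_k^{\ast}$-norm to the $\Box_k$-norm, so its ``$\leqslant$'' is really a ``$\lesssim$''. On the other hand, the paper's route stays entirely within the discrete framework that is used elsewhere in the article and does not require quoting Moyal's identity. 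Your discussion of the extension from $\mathscr{S}\times\mathscr{S}$ to general elements of the spaces is also more careful than what the paper provides.
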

\begin{proof}
  By Lemma \ref{lemma, relation of Mpp Wpp}, we only give the proof of the first inequality.
  Denote $\eta_k^*=\sum\limits_{l\in\mathbb{Z}^n:\eta_k\eta_l\neq0}\eta_k$ and $\Box_k^*=\mathscr{F}^{-1}\eta_k^*\mathscr{F}$.
  By the definition of modulation spaces and Plancherel's equality we get that
  \begin{equation*}
    \begin{split}
      \left|\int_{\mathbb{R}^n}f(x)\bar{g}(x)dx\right|
      &=\left|\int_{\mathbb{R}^n}\hat{f}\hat{g}d\xi\right|
      =\left|\int_{\mathbb{R}^n}\sum_{k\in\mathbb{Z}^n}\sigma_k \hat{f}\cdot \sum_{l\in\mathbb{Z}^n}\sigma_l\hat{g}dx\right|
      \\
      &=\left|\int_{\mathbb{R}^n}\sum_{k\in\mathbb{Z}^n}\sigma_k \hat{f}\cdot \sigma_k^*\hat{g}dx\right|
      =\left|\int_{\mathbb{R}^n}\sum_{k\in\mathbb{Z}^n}\Box_k f \cdot \Box_k^* gdx\right|
      \\
      &\leqslant
      \sum_{k\in\mathbb{Z}^n}\int_{\mathbb{R}^n}\left|\Box_k f\cdot \Box_k^*g\right|dx
      \leqslant
      \sum_{k\in\mathbb{Z}^n} \|\Box_k f\|_{L^{p'}}\|\Box_k^* g\|_{L^{p}}
      \\
      &\leqslant
      \left( \sum_{k\in\mathbb{Z}^n} \|\Box_k f\|_{L^{p'}}^{q'} \right)^{1/{q'}} \left( \sum_{k\in\mathbb{Z}^n}\|\Box_k^* g\|_{L^{p}}^q \right)^{1/q}
      \leqslant
      \|f\|_{M_{p',q'}}\|g\|_{M_{p,q}},
    \end{split}
  \end{equation*}
  where we use H\"{o}lder's inequality in the last two and third inequality and the fact that the definition of modulation space is independent of the decomposition function.
\end{proof}

In order to make the proof more clear, we give the following technical proposition.
\begin{proposition}[For technique]\label{proposition, for technique}
  Let $1/2\leqslant 1/p \leqslant 1/q\leqslant 1$, and $\Phi$ be a nonnegative function satisfying the basic assumption (\ref{basic assumption for Phi}).
  Then
  \begin{enumerate}
    \item if $H_{\Phi} : M_{p,q} \rightarrow L^p$ is bounded, we have
    \begin{equation*}
     \int_{\mathbb{R}^n}|y|^{n/p}\Phi(y)dy<\infty;
    \end{equation*}
    \item if $H_{\Phi}^* :W_{q,p} \rightarrow L^q$ is bounded, we have
    \begin{equation*}
     \int_{\mathbb{R}^n}|y|^{n/q'}\Phi(y)dy<\infty;
    \end{equation*}
    \item if $H_{\Phi}^* : M_{p,q} \rightarrow L^p$ is bounded, we have
    \begin{equation*}
     \int_{\mathbb{R}^n}|y|^{n/p'}\Phi(y)dy<\infty;
    \end{equation*}
    \item if $H_{\Phi} :W_{q,p} \rightarrow L^q$ is bounded, we have
    \begin{equation*}
     \int_{\mathbb{R}^n}|y|^{n/q}\Phi(y)dy<\infty.
    \end{equation*}
  \end{enumerate}

\end{proposition}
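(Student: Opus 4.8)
The plan is to prove each of the four implications by testing the (adjoint) Hausdorff operator on a carefully chosen family of trial functions and then extracting a pointwise lower bound for the relevant weighted integral of $\Phi$. I would begin with part (1), since parts (3), (4), and (2) will follow by the same scheme combined with the symmetry Lemma \ref{lemma, relation of Mpp Wpp} and the identity $H_\Phi^* = \widetilde{H_\Phi}$. The key structural observation is that for a fixed Schwartz function $g$, the dilate $g_\lambda(x) = g(\lambda x)$ interacts with $H_\Phi$ in a scaling-friendly way: since $H_\Phi f(x) = \int_{\mathbb{R}^n} \Phi(y) f(x/|y|)\, dy$, one has $H_\Phi (g_\lambda)(x) = \int_{\mathbb{R}^n} \Phi(y) g(\lambda x / |y|)\, dy$. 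Choosing $g$ nonnegative with $\hat g$ supported near the origin (so that $\Box_k g = 0$ except for $k$ in a fixed finite set), Lemma \ref{lemma, dilation property of modulation space} under the hypothesis $1/2 \le 1/p \le 1/q$ gives $\|g_\lambda\|_{M_{p,q}} \lesssim \max\{\lambda^{-n/p}, \lambda^{-n/q'}\}\|g\|_{M_{p,q}}$; and in the regime $1/2\le 1/p\le 1/q\le 1$ we have $1/p \ge 1/q'$, so the maximum is $\lambda^{-n/p}$ for $\lambda \le 1$. This is exactly the range where the $M_{p,q}$ norm of a low-frequency dilate behaves like the $L^p$ norm.

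Next I would run the boundedness hypothesis $\|H_\Phi f\|_{L^p} \lesssim \|f\|_{M_{p,q}}$ against $f = g_\lambda$ and push $\lambda \to 0$. On the output side, restricting the $L^p$ integral to a fixed ball $B(0,R)$ and using $\Phi \ge 0$, $g \ge 0$, $g \equiv 1$ on $B(0,1)$ say, we get for $|x| \le R$ and $\lambda$ small that $H_\Phi(g_\lambda)(x) = \int \Phi(y) g(\lambda x/|y|)\, dy \ge \int_{|y| \ge \lambda R} \Phi(y)\, dy$, which by the basic assumption (\ref{basic assumption for Phi}) behaves, as $\lambda\to 0$, like $\int_{\{\lambda R \le |y| \le 1\}} \Phi(y)\,dy + O(1)$. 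Combined with $\|g_\lambda\|_{M_{p,q}} \lesssim \lambda^{-n/p}$, this yields $R^{n/p}\big(\int_{\{\lambda R\le|y|\le 1\}}\Phi(y)\,dy + O(1)\big) \lesssim \lambda^{-n/p}$. To convert this into the finiteness of $\int |y|^{n/p}\Phi(y)\,dy$ I would instead choose the dilation parameter adapted to a dyadic decomposition of $\mathbb{R}^n$ in $y$: fix a dyadic shell $\{|y| \sim 2^{-j}\}$, take $\lambda \sim 2^{j}$ (or a bump $g$ concentrated so that $g(\lambda x/|y|)$ is bounded below precisely when $|y|\sim 2^{-j}$), so that the output norm on a $\lambda$-dependent ball picks out $\int_{|y|\sim 2^{-j}} \Phi(y)\,dy$ while the input norm contributes a factor $\sim 2^{-jn/p}$ from the dilation estimate. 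Summing the resulting bounds $2^{jn/p}\int_{|y|\sim 2^{-j}}\Phi(y)\,dy \lesssim 1$ over $j$ — together with the already-known finiteness of $\int_{|y|\ge 1}\Phi(y)\,dy$ and $\int_{|y|\le 1}|y|^n\Phi(y)\,dy$ from (\ref{basic assumption for Phi}), which handles the region $|y|$ large and interpolates the small-$|y|$ region — gives $\int_{\mathbb{R}^n}|y|^{n/p}\Phi(y)\,dy < \infty$.

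For parts (2)–(4) I would reduce to part (1) by duality and symmetry rather than repeating the argument. By the definition of the adjoint and the identity $H_\Phi^* g = \widetilde{H_\Phi} g = \int \Phi(y)|y|^n g(|y|x)\,dy$, the operator $H_\Phi^*$ is again of Hausdorff type but with the "reciprocal" dilation $|y|\mapsto |y|$ replaced by $|y|^{-1}$ and an extra Jacobian weight $|y|^n$; concretely, the change of variables $z = y/|y|^2$ (i.e. $|z| = 1/|y|$) turns $\widetilde{H_\Phi}$ into $H_{\Psi}$ for $\Psi(z) = |z|^{-2n}\Phi(z/|z|^2)$, and the weighted integral $\int |y|^{n/q'}\Phi(y)\,dy$ becomes $\int |z|^{n/q}\Psi(z)\,dz$ up to the Jacobian. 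Then part (2), boundedness $H_\Phi^* : W_{q,p}\to L^q$, reads via Lemma \ref{lemma, relation of Mpp Wpp} (which identifies $W_{q,p}$ with an $M$-type space after a Fourier transform) as a statement of exactly the form treated in part (1), with $(p,q)$ interchanged appropriately; similarly part (3) is the $H_\Phi^*$-analogue of (1) on the same space $M_{p,q}$ and part (4) is the $W_{q,p}$ analogue. I would state the one computational lemma (the change-of-variables identity for $\widetilde{H_\Phi}$ and the corresponding transformation of the weight) once and then invoke it three times.

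The main obstacle I anticipate is the bookkeeping in the trial-function construction: one needs a single nonnegative Schwartz $g$ whose frequency support is small enough that $\|g_\lambda\|_{M_{p,q}}$ is genuinely controlled by the pure dilation factor $\lambda^{-n/p}$ (no logarithmic or boundary losses from the $\ell^q$ sum over $k$), while simultaneously $g$ is spatially localized enough that $H_\Phi(g_\lambda)$ restricted to the appropriate ball isolates a single dyadic shell of $\Phi$ with a clean lower bound. Reconciling these two requirements — low frequency support forces $g$ to be spread out in space, which tends to smear the shell localization — is the delicate point, and it is presumably why the hypothesis $(1/p-1/2)(1/q-1/p)\ge 0$ (equivalently $1/2\le 1/p\le 1/q$ or $1/q\le 1/p\le 1/2$) appears: it is exactly the range where Lemma \ref{lemma, dilation property of modulation space} gives the sharp one-sided dilation bound needed for the argument to close. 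A secondary technical point is making the limit $\lambda\to 0$ (resp. $\lambda\to\infty$ for the large-$|y|$ part) rigorous given only the a priori integrability (\ref{basic assumption for Phi}); I would dispatch this by splitting $\Phi$ at $|y|=1$ from the outset and treating the two ranges with separate (mirror-image) choices of $\lambda$.
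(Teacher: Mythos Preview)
Your proposal has a genuine gap in part~(1). Testing against a single dilated bump $g_\lambda$ with $g\ge c$ on $B(0,1)$ and $\widehat g$ supported near the origin yields, after the computation you sketch, only the \emph{tail} estimate $t^{n/p}\int_{|y|\ge t}\Phi(y)\,dy\lesssim 1$ (or, if you keep the full $L^p$ norm rather than restricting to a ball, the condition $\int_0^\infty r^{n-1}F(r)^p\,dr<\infty$ where $F(r)=\int_{|y|\ge r}\Phi$). Neither of these implies $\int|y|^{n/p}\Phi(y)\,dy<\infty$: for instance, $\Phi(y)=|y|^{-n-n/p}(\log(1/|y|))^{-\alpha}\chi_{\{|y|<1/2\}}$ with $1/p<\alpha\le 1$ satisfies the tail bound and the basic assumption~(\ref{basic assumption for Phi}) but has $\int|y|^{n/p}\Phi=\infty$. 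Your proposed fix---obtaining $2^{jn/p}\int_{|y|\sim 2^{-j}}\Phi\lesssim 1$ for each $j$ and then ``summing over $j$''---does not work, because a uniform bound on each dyadic shell is not summable. The basic assumption~(\ref{basic assumption for Phi}) does not rescue either the large-$|y|$ or the small-$|y|$ region, since $|y|^{n/p}$ is unbounded on $\{|y|\ge 1\}$ and dominates $|y|^n$ on $\{|y|\le 1\}$.

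The paper's proof avoids this by choosing a test function that is \emph{critical} for $L^p$: it takes $f_N(x)=\big(\sum_{j=0}^{N+1}\rho_j(x)|x|^{-n/p}\big)\ast\varphi$ with $\widehat\varphi$ supported in $B(0,1/2)$, so that $\widehat{f_N}$ has small support (hence $\|f_N\|_{M_{p,q}}\sim\|f_N\|_{L^p}$) while $f_N$ behaves like $|x|^{-n/p}$ on a dyadic annulus of width $N$. Then $\|f_N\|_{M_{p,q}}\sim(\log 2^N)^{1/p}$ and $\|H_\Phi f_N\|_{L^p}\gtrsim\big(\int_{\text{shell}_M}|y|^{n/p}\Phi\big)\cdot(\log 2^{N-2M})^{1/p}$; the matching logarithmic growth lets one send $N\to\infty$ and then $M\to\infty$ to extract the full strong integral. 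This borderline-$L^p$ test function is the missing ingredient in your plan; a fixed Schwartz bump, no matter how it is dilated, cannot reproduce this logarithmic matching. (For parts~(2)--(4), the paper repeats the construction with the exponent $q$ in place of $p$ and estimates the $W_{q,p}$ or $M_{p,q}$ norm directly, rather than reducing via the inversion $z=y/|y|^2$; your reduction idea is reasonable in spirit but note the Jacobian gives $\Psi(z)=|z|^{-3n}\Phi(z/|z|^2)$, not $|z|^{-2n}$.)
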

\begin{proof}
We only give the proof of statements (1) and (2), since the other cases can be handled similarly.

  Suppose $H_{\Phi} : M_{p,q} \rightarrow L^p$ is bounded.
  Let $\psi: \mathbb{R}^n \rightarrow [0,1]$ be a smooth bump function supported in the ball $\{\xi: |\xi|<\frac{3}{2}\}$
  and be equal to 1 on the ball $\{\xi: |\xi|\leqslant \frac{4}{3}\}$.
  Let $\rho(\xi)=\psi(\xi)-\psi(2\xi)$. Then $\rho$ is a positive smooth function
  supported in the annulus $\{\xi: \frac{2}{3} < |\xi| < \frac{3}{2} \}$,
  satisfying $\rho(\xi)=1$ on a smaller annulus $\{\xi: \frac{3}{4} \leqslant |\xi| \leqslant \frac{4}{3} \}$.
  Denote $\rho_j(\xi):=\rho(\xi/2^j)$.
  We have
  $\text{supp}\rho_j\subset\{\xi: \frac{2}{3} \cdot 2^j \leqslant |\xi| \leqslant \frac{3}{2} \cdot 2^j\}$
  and $\rho_j(\xi)=1$ on $\{\xi: \frac{3}{4} \cdot 2^j \leqslant |\xi| \leqslant \frac{4}{3} \cdot 2^j\}$.
  Thus, we have $\text{supp}\sum\limits_{j=1}^{N}\rho_j(\xi)\subset \{\xi: \frac{4}{3}\leqslant |\xi| \leqslant \frac{3}{2} \cdot 2^N \}$
  and $\sum\limits_{j=1}^{N}\rho_j(\xi)=1$ on $\{\xi: \frac{3}{2}\leqslant |\xi| \leqslant \frac{4}{3} \cdot 2^N \}$.

  Take $\varphi$ to be a nonnegative smooth function satisfying that $\text{supp}\widehat{\varphi}\subset B(0,1/2)$, $\varphi(0)=1$.
  Choose $f_{N}(x)=\left(\sum\limits_{j=0}^{N+1}\rho_j(x)\cdot|x|^{-n/p}\right)\ast \varphi$.
  So we have
  \begin{equation}\label{to control the number of Box-k}
   \text{supp}\widehat{f_N}\subset B(0,1/2)\text{ and } f_{N}(x)\gtrsim  \sum\limits_{j=1}^{N}\rho_j(x)\cdot|x|^{-n/p}.
  \end{equation}
  In above, the previous inclusion relation follows from the support condition of $\widehat{\varphi}$.
  We interpret the latter inequality.
  We only need to prove it when the right hand is nonzero, i.e. $x\in\{ \frac{4}{3}\leqslant |x| \leqslant \frac{3}{2} \cdot 2^N \}$.
  For the nonnegative function $\varphi$ satisfying $\varphi(0)=1$, there exists a positive constant $\delta<\min\{4/3,1/12\}$, such that $\varphi(x)>1/2$ when $ |x|<\delta$.
  By the triangle inequality and the properties of $\varphi$ we have that
  \begin{equation*}
    \begin{split}
      f_N(x)
      &=\left(\sum\limits_{j=0}^{N+1}\rho_j(x)\cdot|x|^{-n/p}\right)\ast \varphi
      %\\
      =
      \int_{\mathbb{R}^n} \left(\sum_{j=0}^{N+1}\rho_j(x-y)\cdot|x-y|^{-n/p}\right) \varphi(y)dy
      \\
      &\gtrsim
      \int_{\substack{\frac{4}{3}\leq|x-y|\leq \frac{3}{2} \cdot 2^N \\|y|<\delta}}
      \sum_{j=0}^{N+1}\rho_j(x-y) \cdot |x|^{-n/p} dy
      %\\
      \gtrsim
      \sum_{j=1}^{N}\rho_j(x)\cdot |x|^{-n/p},
    \end{split}
  \end{equation*}
  so we prove (\ref{to control the number of Box-k}).

%  Recalling that $\Phi$ is nonnegative, for $0<M<<N$,
%  we have the following estimate:
  \begin{equation*}
    \begin{split}
      \left\| H_{\Phi}f_N \right\|_{L^p}
      &=\left\| \int_{\mathbb{R}^n}\Phi(y)f_N(x/{|y|})dy \right\|_{L^p}
      \\
      &\gtrsim
      \left\| \int_{\mathbb{R}^n}\Phi(y)|y|^{n/p} \cdot \sum_{j=1}^{N}\rho_j(x/|y|) \cdot |x|^{-n/p}dy \right\|_{L^p}
      \\
      &\geqslant
      \left\| \int_{B(0,\frac{2}{3}\cdot2^M)\setminus B(0,\frac{3}{4}\cdot2^{-M})}\Phi(y)|y|^{n/p} \cdot
      \sum_{j=1}^{N}\rho_j(x/|y|) \cdot |x|^{-n/p}dy \right\|_{L^p}
      \\
      &\geqslant
      \left\| \int_{B(0,\frac{2}{3}\cdot2^M)\setminus B(0,\frac{3}{4}\cdot2^{-M})}\Phi(y)|y|^{n/p} \cdot
      \chi_{\{2^M<|x|< 2^{N-M}\}}(x)\cdot |x|^{-n/p}dy \right\|_{L^p}
      \\
      &=
      \int_{B(0,\frac{2}{3}\cdot2^M)\setminus B(0,\frac{3}{4}\cdot2^{-M})}\Phi(y)|y|^{n/p}dy
      \cdot \left\| |x|^{-n/p}\chi_{\{2^M<|x|< 2^{N-M}\}}(x) \right\|_{L^p}
      \\
      &\gtrsim
      \int_{B(0,\frac{2}{3}\cdot2^M)\setminus B(0,\frac{3}{4}\cdot2^{-M})}\Phi(y)|y|^{n/p}dy \cdot \left(\lg2^{N-2M}\right)^{1/p},
    \end{split}
  \end{equation*}
  where we use the fact that $\sum_{j=1}^{N}\rho_j(x/|y|)=1$ for $y\in B(0,\frac{2}{3}\cdot2^M)\setminus B(0,\frac{3}{4}\cdot2^{-M})$
  and  $x\in B(0,2^M)\setminus B(0,2^{N-M})$.
  On the other hand, observing that $\text{supp}\widehat{f_N}\subset B(0,1/2)$, we have that
  \begin{equation*}
    \begin{split}
      \left\| f_N \right\|_{M_{p,q}}
      &=
      \left(\sum_{\substack{\sigma_k\widehat{f_N}\neq 0\\k\in\mathbb{Z}^n}}\left\| \mathscr{F}^{-1}(\sigma_k \widehat{f_N}) \right\|_{L^p}^q\right)^{1/q}
      %\\
      %&
      \lesssim
      \left(\sum_{\substack{\sigma_k\widehat{f_N}\neq 0\\k\in\mathbb{Z}^n}}\left\|f_N \right\|_{L^p}^q\right)^{1/q}
      \\
      &\lesssim
      \|f_N\|_{L^p}\lesssim \|\sum\limits_{j=0}^{N+1}\rho_j(x)\cdot|x|^{-n/p}\|_{L^p}\sim (\ln2^N)^{1/p}.
    \end{split}
  \end{equation*}
  Using the boundedness of $H_{\Phi}$ and the above estimates for $H_{\Phi}f_N$ and $f_N$, we have that
  \begin{equation*}
    \|H_{\Phi}\|_{M_{p,q}\rightarrow L^p}
    \geqslant
    \frac{\left\| H_{\Phi}f_N\right\|_{L^p}}{\|f_N\|_{M_{p,q}}}
    \gtrsim
    \int_{B(0,\frac{2}{3}\cdot2^M)\setminus B(0,\frac{3}{4}\cdot2^{-M})}\Phi(y)|y|^{n/p}dy
    \left(\frac{\lg2^{N-2M}}{\lg2^{N}}\right)^{1/p}.
  \end{equation*}
  Letting $N\rightarrow\infty$, we have
  \begin{equation*}
    \int_{B(0,\frac{2}{3}\cdot2^M)\setminus B(0,\frac{3}{4}\cdot2^{-M})}\Phi(y)|y|^{n/p}dy
    \lesssim
    \|H_{\Phi}\|_{M_{p,q}\rightarrow L^p}.
  \end{equation*}
  By the arbitrariness of $M$, we let $M\rightarrow \infty$ and obtain that
  $\int_{\mathbb{R}^n}\Phi(y)|y|^{n/p}dy\lesssim \|H_{\Phi}\|_{M_{p,q}\rightarrow L^p}$.

  %%%%%%%%%%%%%%%%%%%%%%%%%%%%%%%%%%%%%%%%%%%%%%%%%%%%%%%%%%%%%%%%%%%%%%%%%%%%%%%%%%%%%%%%%%%%%%%%%%%%%%%%%%%%%%%%%%%%%%%%%%%%%%%%%%%%%%%%%%%%%%%%%%%
  %%%%%%%%%%%       proof of the second one           %%%%%%%%%%%%%%%%%%%%%%%%%%%%%%%%%%%%%%%%%%%%%%%%%%%%%%%%%%%%%%%%%%%%%%%%%%%%%%%%%%%%%%%%%%%%%%%
  Now we turn to give the proof for the second conclusion.
  Suppose $H_{\Phi}^* : W^{q,p} \rightarrow L^q$ is bounded.
  As in the proof of conclusion (1), we take $g_{N}(x)=\sum\limits_{j=1}^{N}\rho_j(x)\cdot|x|^{-n/q}$.
  A direction calculation yields that
  \begin{equation*}
    \begin{split}
      \left\| H_{\Phi}^*g_N \right\|_{L^q}
      &=
      \left\| \int_{\mathbb{R}^n}\Phi(y)|y|^ng_N(|y|x)dy \right\|_{L^q}
      \\
      &=
      \left\| \int_{\mathbb{R}^n}\Phi(y)|y|^{n/q'} \cdot \sum_{j=1}^{N}\rho_j(|y|x) \cdot |x|^{-n/q}dy \right\|_{L^p}
      \\
      &\geqslant
      \left\| \int_{B(0,4/3\cdot2^{M})\setminus B(0,3/2\cdot2^{-M})}\Phi(y)|y|^{n/q'} \cdot \sum_{j=1}^{N}\rho_j(|y| x) \cdot |x|^{-n/q}dy \right\|_{L^q}
      \\
      &\geqslant
      \int_{B(0,4/3\cdot2^{M})\setminus B(0,3/2\cdot2^{-M})}\Phi(y)|y|^{n/q'}dy
      \cdot \left\| |x|^{-n/q}\chi_{\{2^M<|x|<\cdot2^{N-M}\}}(x) \right\|_{L^q}
      \\
      &\gtrsim
      \int_{B(0,4/3\cdot2^{M})\setminus B(0,3/2\cdot2^{-M})}\Phi(y)|y|^{n/q'}dy \cdot (\lg2^{N-2M})^{1/q}.
    \end{split}
  \end{equation*}
  On the other hand,
  \begin{equation*}
    \begin{split}
      \|\mathscr{F}^{-1}(\sigma_kg_N)\|_{L^p}
      = &
      \|\mathscr{F}^{-1}(\sigma_k\sum\limits_{j=1}^{N}\rho_j(x)\cdot|x|^{-n/q})\|_{L^p}
      %\\
      \lesssim %&
      \langle k\rangle^{-n/q}\|\mathscr{F}^{-1}(\sigma_k\sum\limits_{j=1}^{N}\rho_j(x))\|_{L^p}
      \\
      \leqslant &
      \langle k\rangle^{-n/q}\|\mathscr{F}^{-1}\sigma_k\|_{L^p}\cdot
      \sum\limits_{1\leqslant j\leqslant N: \sigma_k\rho_j\neq 0}\|\mathscr{F}^{-1}(\rho_j(x))\|_{L^1}
      \\
      \lesssim &
      \langle k\rangle^{-n/q}.
    \end{split}
  \end{equation*}

Using Lemma \ref{lemma, relation of Mpp Wpp}, we obtain that
  \begin{equation*}
    \begin{split}
      \left\| g_N \right\|_{W_{q,p}}
      &=
      \left\| \mathscr{F}^{-1}g_N \right\|_{M_{p,q}}
      =
      \left(\sum_{k\in\mathbb{Z}^n}\left\| \mathscr{F}^{-1}(\sigma_k g_N) \right\|_{L^p}^q\right)^{1/q}
      \\
      &\lesssim
      \left(
      \sum_{\substack{|k|<2^{N+1}\\k\in\mathbb{Z}^n}} \langle k\rangle^{-n}
      \right)^{1/q}
      \sim
      \left(\lg2^{N}\right)^{1/q}.
    \end{split}
  \end{equation*}
  We deduce that
  \begin{equation*}
    \|H_{\Phi}^*\|_{W_{q,p}\rightarrow L^q}
    \geqslant
    \frac{\left\| H_{\Phi}^*f_N\right\|_{L^q}}{\|f_N\|_{W_{q,p}}}
    \gtrsim
    \int_{B(0,\frac{4}{3}\cdot2^{M})\setminus B(0,\frac{3}{2}\cdot2^{-M})}\Phi(y)|y|^{n/q'}dy
    \left(\frac{\lg2^{N-2M}}{\lg2^{N}}\right)^{1/q}.
  \end{equation*}
  Letting $N\rightarrow\infty$, we have
  \begin{equation*}
    \int_{B(0,\frac{4}{3}\cdot2^{M})\setminus B(0,\frac{3}{2}\cdot2^{-M})}\Phi(y)|y|^{n/q'}dy
    \lesssim
    \|H_{\Phi}^*\|_{W_{q,p}\rightarrow L^q}.
  \end{equation*}
  By the arbitrariness of $M$, we let $M\rightarrow \infty$ and obtain that
  $\int_{\mathbb{R}^n}\Phi(y)|y|^{n/q'}dy\lesssim \|H^*_{\Phi}\|_{W_{q,p}\rightarrow L^q}$.
\end{proof}

Next, we establish the
following two propositions for reduction.
\begin{proposition}[For reduction of modulation space]\label{proposition, for reduction of modulation space}
  Let $1/2\leqslant 1/p \leqslant 1/q\leqslant 1$, $\Phi$ be a nonnegative function satisfying (\ref{basic assumption for Phi}).
  If the Hausdorff operator $H_{\Phi}$ is bounded on $M_{p,q}$, we have
  \begin{enumerate}
    \item $H_{\Phi} : M_{p,q} \rightarrow L^p$ is bounded,
    \item $H_{\Phi}^* :W_{q,p} \rightarrow L^q$ is bounded.
  \end{enumerate}
\end{proposition}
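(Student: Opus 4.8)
The plan is to derive both conclusions from the standing hypothesis (boundedness of $H_{\Phi}$ on $M_{p,q}$) by combining it with the embedding lemmas and the time--frequency symmetry lemma; no new pointwise or integral estimate on $\Phi$ is needed, so this is a purely "soft" reduction. Observe first that the index condition $1/2\leqslant 1/p\leqslant 1/q\leqslant 1$ forces $1\leqslant q\leqslant p\leqslant 2$; in particular $p,q<\infty$, so $\mathscr{S}$ is dense in both $M_{p,q}$ and $W_{q,p}$, which lets me argue on Schwartz functions and pass to the closure at the end. For statement (1): since $1/q\geqslant 1/p\geqslant 1/2$, Lemma~\ref{lemma, embedding relations between Mpp and Lp}(1) gives the embedding $M_{p,q}\hookrightarrow L^p$, and composing the bounded map $H_{\Phi}\colon M_{p,q}\to M_{p,q}$ with this embedding immediately yields the boundedness of $H_{\Phi}\colon M_{p,q}\to L^p$.

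For statement (2) the idea is to conjugate $H_{\Phi}^{*}$ by the Fourier transform. From the preliminary remarks, $H_{\Phi}^{*}=\widetilde{H_{\Phi}}$ and $\widehat{H_{\Phi}f}=\widetilde{H_{\Phi}}\widehat{f}$, hence $H_{\Phi}^{*}=\mathscr{F}H_{\Phi}\mathscr{F}^{-1}$ in the distributional sense. Given $g\in\mathscr{S}$, put $h:=\mathscr{F}^{-1}g$, so that $H_{\Phi}^{*}g=\mathscr{F}(H_{\Phi}h)$. Lemma~\ref{lemma, relation of Mpp Wpp} gives $\|h\|_{M_{p,q}}\sim\|g\|_{W_{q,p}}$ and, applied to $H_{\Phi}h$ in place of $\mathscr{F}^{-1}f$, also $\|\mathscr{F}(H_{\Phi}h)\|_{W_{q,p}}\sim\|H_{\Phi}h\|_{M_{p,q}}$. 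Since $1/q\geqslant 1/p\geqslant 1/2$, Lemma~\ref{lemma, embedding relations between Wpp and Lp}(1) (with the roles of $p$ and $q$ interchanged) gives $W_{q,p}\hookrightarrow L^q$. Chaining these with the assumed bound $\|H_{\Phi}h\|_{M_{p,q}}\lesssim\|h\|_{M_{p,q}}$,
\begin{align*}
  \|H_{\Phi}^{*}g\|_{L^q}&=\|\mathscr{F}(H_{\Phi}h)\|_{L^q}\lesssim\|\mathscr{F}(H_{\Phi}h)\|_{W_{q,p}}\\
  &\sim\|H_{\Phi}h\|_{M_{p,q}}\lesssim\|h\|_{M_{p,q}}\sim\|g\|_{W_{q,p}},
\end{align*}
and a density argument (using the continuity of $H_{\Phi}^{*}$ on $\mathscr{S}'$) extends the inequality to all $g\in W_{q,p}$, which is statement (2).

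The argument is essentially bookkeeping, and I do not expect a genuine obstacle; the one point that must be checked with care is that the exponents line up so that \emph{both} embeddings are available at once. It is precisely the hypothesis $1/2\leqslant 1/p\leqslant 1/q$ that simultaneously yields $M_{p,q}\hookrightarrow L^p$ (needed for (1)) and $W_{q,p}\hookrightarrow L^q$ (needed for (2)), and the same range keeps $p,q$ finite, so that the density argument and the identity $H_{\Phi}^{*}=\mathscr{F}H_{\Phi}\mathscr{F}^{-1}$ are legitimate on the spaces in question; these facts are already recorded in the preliminary lemmas and remarks.
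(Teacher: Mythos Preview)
Your proof is correct and follows essentially the same route as the paper's own argument: both parts use the embedding $M_{p,q}\hookrightarrow L^p$ for (1), and for (2) both use the identity $\widehat{H_{\Phi}f}=H_{\Phi}^{*}\widehat f$ together with Lemma~\ref{lemma, relation of Mpp Wpp} to transfer the $M_{p,q}$-boundedness of $H_{\Phi}$ to $W_{q,p}$-boundedness of $H_{\Phi}^{*}$, followed by the embedding $W_{q,p}\hookrightarrow L^q$. Your explicit formulation $H_{\Phi}^{*}=\mathscr{F}H_{\Phi}\mathscr{F}^{-1}$ and the closing density remark are minor cosmetic additions, not a different method.
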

\begin{proof}
  The first conclusion can be deduced by the embedding relation
  $M_{p,q}\hookrightarrow L^p$ (see Lemma \ref{lemma, embedding relations between Mpp and Lp}) directly.
  We turn to prove the second conclusion.
  For any Schwartz function $f$, by the property of $H_{\Phi}$ and Lemma \ref{lemma, relation of Mpp Wpp}, we have
  $\|f\|_{M_{p,q}}=\|\widehat{f}\|_{W_{q,p}}$ and
  \begin{equation*}
    \|H_{\Phi}f\|_{M_{p,q}}=\|\widehat{H_{\Phi}f}\|_{W_{q,p}}
    =\|\widetilde{H_{\Phi}}\widehat{f}\|_{W_{q,p}}=\|H^*_{\Phi}\widehat{f}\|_{W_{q,p}}.
  \end{equation*}
  Thus, if $H_{\Phi}$ is bounded on $M_{p,q}$, we have
  \begin{equation*}
    \|H^*_{\Phi}\widehat{f}\|_{W_{q,p}}\lesssim \|\widehat{f}\|_{W_{q,p}}.
  \end{equation*}
  The embedding relation $W_{q,p}\hookrightarrow L^q$ then yields that
  \begin{equation*}
    \|H^*_{\Phi}f\|_{L^q}\lesssim \|f\|_{W_{q,p}}
  \end{equation*}
  for all $f\in \mathscr{S}(\mathbb{R}^n)$.
\end{proof}

\begin{proposition}[For reduction of Wiener amalgam space]\label{proposition, for reduction of Wiener amalgam space}
  Let $1/2\leqslant 1/q \leqslant 1/p\leqslant 1$, $\Phi$ be a nonnegative function satisfying (\ref{basic assumption for Phi}).
  If the Hausdorff operator $H_{\Phi}$ is bounded on $W_{p,q}$, we have
  \begin{enumerate}
    \item $H_{\Phi} : W_{p,q} \rightarrow L^p$ is bounded,
    \item $H_{\Phi}^* :M_{q,p} \rightarrow L^q$ is bounded.
  \end{enumerate}
\end{proposition}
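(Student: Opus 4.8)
The plan is to mirror the proof of Proposition~\ref{proposition, for reduction of modulation space}, interchanging the roles of $p$ and $q$ and of the modulation and Wiener amalgam scales.

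For conclusion (1) I would argue directly from an embedding: under the hypothesis $1/p\geqslant 1/q\geqslant 1/2$, Lemma~\ref{lemma, embedding relations between Wpp and Lp}(1) gives $W_{p,q}\hookrightarrow L^p$, so composing with the assumed boundedness of $H_{\Phi}$ on $W_{p,q}$ yields $\|H_{\Phi}f\|_{L^p}\lesssim\|H_{\Phi}f\|_{W_{p,q}}\lesssim\|f\|_{W_{p,q}}$.

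For conclusion (2) the idea is to transfer the boundedness to the Fourier side. By Lemma~\ref{lemma, relation of Mpp Wpp} one has $\|g\|_{W_{p,q}}\sim\|\widehat g\|_{M_{q,p}}$ for every $g\in\mathscr{S}$, and by the remarks preceding the definitions $\widehat{H_{\Phi}f}=\widetilde{H_{\Phi}}\widehat f=H_{\Phi}^{*}\widehat f$ in $\mathscr{S}'$. Hence, for $f\in\mathscr{S}$,
\begin{equation*}
  \|H_{\Phi}^{*}\widehat f\|_{M_{q,p}}=\|\widetilde{H_{\Phi}}\widehat f\|_{M_{q,p}}=\|\widehat{H_{\Phi}f}\|_{M_{q,p}}\sim\|H_{\Phi}f\|_{W_{p,q}}\lesssim\|f\|_{W_{p,q}}\sim\|\widehat f\|_{M_{q,p}},
\end{equation*}
so $H_{\Phi}^{*}$ is bounded on $M_{q,p}$ (on the dense subspace $\mathscr{S}$, hence on all of $M_{q,p}$). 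Finally, since $1/p\geqslant 1/q\geqslant 1/2$, Lemma~\ref{lemma, embedding relations between Mpp and Lp}(1) with $p$ and $q$ interchanged gives $M_{q,p}\hookrightarrow L^q$; composing, $\|H_{\Phi}^{*}f\|_{L^q}\lesssim\|H_{\Phi}^{*}f\|_{M_{q,p}}\lesssim\|f\|_{M_{q,p}}$ for all $f\in\mathscr{S}$, i.e.\ $H_{\Phi}^{*}:M_{q,p}\to L^q$ is bounded.

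This is essentially a duality/symmetry bookkeeping argument, so I do not expect a genuine obstacle. The only points requiring care are: checking that the index constraint $1/2\leqslant 1/q\leqslant 1/p\leqslant 1$ is precisely what makes \emph{both} embedding lemmas ($W_{p,q}\hookrightarrow L^p$ and $M_{q,p}\hookrightarrow L^q$) applicable; using the identity $\widehat{H_{\Phi}f}=H_{\Phi}^{*}\widehat f$ in the correct direction; and noting that working with $f\in\mathscr{S}$ suffices because $p,q$ are finite here, so the a priori estimate extends by density.
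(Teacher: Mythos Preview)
Your argument is correct and follows essentially the same route as the paper's own proof: conclusion (1) via the embedding $W_{p,q}\hookrightarrow L^p$, and conclusion (2) by passing to the Fourier side via Lemma~\ref{lemma, relation of Mpp Wpp} and the identity $\widehat{H_{\Phi}f}=H_{\Phi}^{*}\widehat f$, then invoking $M_{q,p}\hookrightarrow L^q$. Your additional remarks about density and the index check are accurate but not strictly needed beyond what the paper already does.
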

\begin{proof}
  The first conclusion can be deduced by the embedding relation
  $W_{p,q}\hookrightarrow L^p$ (see Lemma \ref{lemma, embedding relations between Wpp and Lp}) directly.
  We turn to prove the second conclusion.
  For any Schwartz function $f$, by the property of $H_{\Phi}$ and Lemma \ref{lemma, relation of Mpp Wpp}, we have that
  $\|f\|_{W_{p,q}}=\|\widehat{f}\|_{M_{q,p}}$ and
  \begin{equation*}
    \|H_{\Phi}f\|_{W_{p,q}}=\|\widehat{H_{\Phi}f}\|_{M_{q,p}}
    =\|\widetilde{H_{\Phi}}\widehat{f}\|_{M_{q,p}}=\|H^*_{\Phi}\widehat{f}\|_{M_{q,p}}.
  \end{equation*}
  Thus, if $H_{\Phi}$ is bounded on $W_{p,q}$, we have
  \begin{equation*}
    \|H^*_{\Phi}\widehat{f}\|_{M_{q,p}}\lesssim \|\widehat{f}\|_{M_{q,p}}.
  \end{equation*}
  The embedding relation $M_{q,p}\hookrightarrow L^q$ then yields that
  \begin{equation*}
    \|H^*_{\Phi}f\|_{L^q}\lesssim \|f\|_{M_{q,p}}
  \end{equation*}
  for all $f\in \mathscr{S}(\mathbb{R}^n)$.
\end{proof}
Now, we are ready to give the proof of Theorem \ref{theorem, boundedness of Hausdorff operator on modulation space}.

\textbf{Proof of Theorem \ref{theorem, boundedness of Hausdorff operator on modulation space}.}
We divide this proof into two parts.

\textbf{IF PART:}
Using the Minkowski inequality, we deduce that
\begin{equation*}
\begin{split}
  \|H_{\Phi}f\|_{M_{p,q}}
  \lesssim &
  \left\| \int_{\mathbb{R}^n}\Phi(y)f(x/{|y|})dy \right\|_{M_{p,q}}
  \\
  \lesssim &\int_{\mathbb{R}^n}\Phi(y)\left\| f(x/{|y|}) \right\|_{M_{p,q}} dy.
\end{split}
\end{equation*}
Recalling the dilation properties of modulation space (see Lemma \ref{lemma, dilation property of modulation space}), we obtain that
\begin{equation*}
  \begin{split}
    \|H_{\Phi}f\|_{M_{p,q}}
     &\lesssim
              \int_{\mathbb{R}^n}\Phi(y) \max\{|y|^{n/p},|y|^{n/q'}\} dy\left\| f\right\|_{M_{p,q}}
     \\
     &\lesssim \int_{\mathbb{R}^n} \left( |y|^{n/{p}}+|y|^{n/q'} \right) \Phi(y) dy \left\| f\right\|_{M_{p,q}}.
  \end{split}
\end{equation*}
This implies the boundedness of $H_{\Phi}$ on $M_{p,q}$.% when $1/p<1/2$.

\textbf{ONLY IF PART:}
Suppose $H_{\Phi}$ is bounded on $M_{p,q}$.
If $1/2\leqslant 1/p\leqslant 1/q\leqslant 1$, the conclusion can be verified directly by Proposition \ref{proposition, for technique} and \ref{proposition, for reduction of modulation space}.

We only need to deal with the case of $1/q\leqslant 1/p\leqslant 1/2$.
We use a dual argument to deal with this case.
Recalling
\begin{equation*}
  \langle H_{\Phi}^*f|\ g\rangle=\langle f|\ H_{\Phi}g\rangle
\end{equation*}
for all $f,g\in \mathscr{S}(\mathbb{R}^n)$, by Lemma \ref{Holder inequality of modulation space and Wiener space} and Lemma \ref{lemma, embedding relations between Mpp and Lp} we deduce that
\begin{equation*}
  \begin{split}
    |\langle H_{\Phi}^*f|\ g\rangle|
    = &
    |\langle f|\ H_{\Phi}g\rangle|
    \\
    \leqslant &
    \|f\|_{M_{p',q'}}\|H_{\Phi}g\|_{M_{p,q}}
    \\
    \lesssim &
    \|f\|_{M_{p',q'}}\|g\|_{M_{p,q}}
        \\
    \lesssim &
    \|f\|_{M_{p',q'}}\|g\|_{L^p},
  \end{split}
\end{equation*}
which implies that
\begin{equation}\label{for proof, 1}
  \|H^*_{\Phi}f\|_{L^{p'}}\lesssim \|f\|_{M_{p',q'}}
\end{equation}
for all $f\in \mathscr{S}(\mathbb{R}^n)$.
In addition, by the boundedness of $H_{\Phi}$ on $M_{p,q}$, we use Lemma \ref{lemma, relation of Mpp Wpp} to deduce that
$H^*_{\Phi}$ is also bounded on $W_{q,p}$.
Thus, by Lemma \ref{Holder inequality of modulation space and Wiener space} and Lemma \ref{lemma, embedding relations between Wpp and Lp} we have
\begin{equation*}
  \begin{split}
    |\langle H_{\Phi}f|\ g\rangle|
    = &
    |\langle f|\ H_{\Phi}^*g\rangle|
    \\
    \leqslant &
    \|f\|_{W_{q',p'}}\|H_{\Phi}^*g\|_{W_{q,p}}
    \\
    \lesssim &
    \|f\|_{W_{q',p'}}\|g\|_{W_{q,p}}
        \\
    \lesssim &
    \|f\|_{W_{q',p'}}\|g\|_{L^q},
  \end{split}
\end{equation*}
which implies that
\begin{equation}\label{for proof, 2}
  \|H_{\Phi}f\|_{L^{q'}}\lesssim \|f\|_{W_{q',p'}}
\end{equation}
for all $f\in \mathscr{S}(\mathbb{R}^n)$.

Combining (\ref{for proof, 1}) and (\ref{for proof, 2}), observing $1/2\leqslant 1/p'\leqslant 1/q'$,
we use Proposition \ref{proposition, for technique} to get the conclusion.

\textbf{Proof of Theorem \ref{theorem, boundedness of Hausdorff operator on Wiener amalgam space}.}
We divide this proof into two parts.

\textbf{IF PART:}
Using Lemma \ref{lemma, relation of Mpp Wpp} and the Minkowski inequality, we deduce that
\begin{equation*}
\begin{split}
  \|H_{\Phi}f\|_{W_{p,q}}
  \sim&
  \|\widehat{H_{\Phi}f}\|_{M_{q,p}}
  \sim
  \|H_{\Phi}^{\ast}\widehat{f}\|_{M_{q,p}}
  %\\
  \sim
  \left\| \int_{\mathbb{R}^n}\Phi(y)|y|^n\widehat{f}(|y|x)dy \right\|_{M_{q,p}}
  \\
  \lesssim &\int_{\mathbb{R}^n}\Phi(y)|y|^n\left\| \widehat{f}(|y|x) \right\|_{M_{q,p}} dy.
\end{split}
\end{equation*}
Recalling the dilation properties of modulation space (see Lemma \ref{lemma, dilation property of modulation space}), we obtain that
\begin{equation*}
  \begin{split}
    \|H_{\Phi}f\|_{W_{p,q}}
     &\lesssim
              \int_{\mathbb{R}^n}\Phi(y)|y|^n \max\{|y|^{-n/q},|y|^{-n/p'}\} dy\|\widehat{f}\|_{M_{q,p}}
     \\
     &\lesssim \int_{\mathbb{R}^n} \left( |y|^{n/{p}}+|y|^{n/q'} \right) \Phi(y) dy \left\| f\right\|_{W_{p,q}}.
  \end{split}
\end{equation*}
This implies the boundedness of $H_{\Phi}$ on $W_{p,q}$.% when $1/p<1/2$.

\textbf{ONLY IF PART:}
Suppose $H_{\Phi}$ is bounded on $W_{p,q}$.
If $1/2\leqslant 1/q\leqslant 1/p\leqslant 1$, the conclusion can be verified directly by Proposition \ref{proposition, for technique} and \ref{proposition, for reduction of Wiener amalgam space}.

For the case $1/p\leqslant 1/q\leqslant 1/2$, the desired conclusion follows by a dual argument as in the proof of Theorem \ref{theorem, boundedness of Hausdorff operator on modulation space}.

\begin{remark}
  For some technical reasons,
  our main theorems only characterize the boundedness of Huasdorff operator on $M_{p,q}$ and $W_{p,q}$ in some special cases.
  Our theorems remain an open problem for the characterization of Huasdorff operator
  on the full range $1\leqslant p,q \leqslant \infty$.
\end{remark}
~\\
\\
{\bf Acknowledgments.}
The authors would like to express their deep thanks to the referee for many helpful comments and
suggestions.
This work is supported by the National Natural Science Foundation of China 
(No. 11601456, 11771388, 11371316, 11701112, 11671414) and China postdoctoral Science Foundation (No. 2017M612628).

\bibliographystyle{amsplain}

\end{document}